\newtheorem{theorem}{Theorem}
\newtheorem{corollary}[theorem]{Corollary}
\newtheorem{lemma}[theorem]{Lemma}
\newtheorem{proposition}[theorem]{Proposition}
\newtheorem{remark}[theorem]{Remark}
\def\div{\mbox{div}\,}
\def\R{\mathbb{R}}
\def\Sc{\mbox{Sc}\,}
\def\Vec{\mbox{Vec}\,}
\def\tr{\operatorname{\rm tr}}
\begin{document}

\title{Calderón's inverse problem via Vekua theory}
\author{Briceyda B. Delgado\thanks{INFOTEC, Centro de Investigación e Innovación en Tecnologías de la Información y Comunicación
\\Cto. Tecnopolo Sur No. 112, Pocitos, Aguascalientes 20326, Mexico}}
\date{}
\maketitle
\begin{abstract}
In this work, we will prove a uniqueness result for Calderón's inverse problem via some integral representation formulas for solutions of the Vekua equation in the framework of Clifford analysis.
\end{abstract}
\mbox{}\\
\noindent \textbf{Keywords:} Calderón problem, electrical impedance tomography, Vekua equation, Hodge decomposition, conductivity equation, Schrödinger equation.\\ 
\mbox{}\\
\noindent \textbf{Classification:} 35R30, 30G20. 


\section{Introduction}
Let us consider the conductivity equation in a bounded domain $\Omega\subset \mathbb{R}^n$, $n\geq 3$, with Lipschitz boundary
\begin{equation}\label{eq:conductivity}
  \div \left(\sigma \nabla u_0\right)=0, \qquad \text{ in }\Omega, 
\end{equation}
where $\sigma$ is a non-vanishing conductivity function in $W^{1,\infty}(\Omega)$. Given $\varphi_0\in H^{1/2}(\partial\Omega)$, then \eqref{eq:conductivity} has a unique weak solution $u_0$ in $H^1(\Omega)$ such that $u_0|_{\partial\Omega}=\varphi_0$.

The Calderón's inverse conductivity problem is about recovering the conductivity $f$ from the Dirichlet-to-Neumann map defined as
\begin{align}\label{eq:D-N}
\nonumber \Lambda_{\sigma}\colon H^{1/2}(\partial \Omega)&\rightarrow H^{-1/2}(\partial \Omega),\\
  \varphi_0 &\mapsto \sigma \nabla u_0|_{\partial \Omega}\cdot \eta,
\end{align}
where $\eta$ is the outward unit normal vector to $\partial\Omega$ and $u_0\in H^1(\Omega)$ is solution of the conductivity equation \eqref{eq:conductivity} with $u_0|_{\partial\Omega}=\varphi_0$.
There are several aspects that are interesting both from mathematical theory and practical applications. In this manuscript, we will address the uniqueness result in Section \ref{sec:uniqueness}, which, in short, consists of showing that the map $\sigma\mapsto \Lambda_{\sigma}$ is injective. In 1980, Calderón proved that the inverse conductivity problem, also known as the electrical impedance tomography problem, admits a clear and precise mathematical formulation through the conductivity equation \cite{Calderon}.

In the complex case, Astala and P\"aiv\"{a}rinta proved the uniqueness for $L^{\infty}(\Omega)$ conductivities away from zero and infinity \cite{AstPai2006} and the reconstruction in \cite{Astala-2}.

In higher dimensions, $n\geq 3$, it has been more than four decades of interesting works concerning the uniqueness result for Calder\'{o}n problem, for instance, Kohn and Vogelius proved uniqueness for real analytic conductivities \cite{Vogelius}, Sylvester and Uhlmann proved for smooth conductivities \cite{Sylvester-Uhlmann-1986, Sylvester-Uhlmann, Sylvester-Uhlmann-1988}, Nachman, Sylvester and Uhlmann improved for twice continuously differentiable conductivities \cite{Nachman}. 
This was followed by Brown \cite{Brown-1996} 
who relaxed the regularity of the conductivity to $3/2+\epsilon$ derivatives. Later, P\"aiv\"arinta, Panchenko, and Uhlmann proved the uniqueness result for conductivities in $W^{3/2,\infty}(\Omega)$ \cite{Paivarinta2003}, see also the work of Brown and Torres \cite{Russell-1}. More recently, Haberman and Tataru proved uniqueness for Lipschitz conductivities with logarithmic gradient sufficiently small \cite{Haberman-Tataru}, and in 2016, Caro and Rogers removed the smallness constraint from the uniqueness result in \cite{Caro-Rogers}.

In this work, we aim not only to strengthen the understanding of the uniqueness result for the Calderón problem but also to develop tools that could be adapted to related inverse problems, for instance, for the anisotropic Maxwell equation \cite{Salo-2009}.
In particular, we continue the work developed in \cite{Delgado2023}, where we analyze the space of solutions of the \textit{Vekua equation} 
\begin{align}\label{eq:Vekua_equation}
   D w=\alpha\, \overline{w}, \quad \text{ in }\Omega,
\end{align}
in the framework of Clifford analysis. It was proved that for the \textit{main Vekua equation}. That is, in the particular case when $\alpha=\nabla f/f$, where $f$ a non-vanishing function in $W^{1,\infty}(\Omega)$, there exists an interrelation between the solutions of \eqref{eq:Vekua_equation} and the  solutions of the \textit{Clifford Beltrami equation}
\begin{equation}\label{eq:Clifford-Beltrami}
    Du=\dfrac{1-f^2}{1+f^2} D\overline{u}\qquad \text{in }\Omega,
\end{equation}
where $u=f^{-1}\sum_{k\equiv 0,3 (\text{mod } 4)} [w]_k+f\, \sum_{k\equiv 1,2 (\text{mod } 4)} [w]_k$, see Proposition \ref{prop:Beltrami equivalence}. Observe that the Beltrami coefficient $\mu:=(1-f^2)/(1+f^2)$ coincides with the one from Astala and P\"aiv\"arinta \cite{AstPai2006, Astala-2} in the complex case. 
Spaces of solutions of the Vekua equation had been extensively studied in the complex case from the perspective of Hardy classes in \cite{BLRR2010, BFL2011}, and from Bergman classes in \cite{DelLeb2019, Vicente2025}. In \cite{Santa2019}, Santacesaria noted that a natural framework for analyzing this inverse problem is via Clifford algebras; the main result of that work showed the equivalence between solutions of the Clifford Beltrami equation \eqref{eq:Clifford-Beltrami} and solutions of a div-curl system, which is in turn equivalent to analyzing the solutions of the main Vekua equation in the quaternionic case. Simultaneously, some quaternionic analysis techniques were used to construct solutions and to provide a Hilbert transform for the main Vekua equation in the three-dimensional case, see \cite{DelPor2018, DelPor2019}.

The outline of this paper is as follows. In Section \ref{sec:Preliminaries}, we introduce some necessary definitions of Clifford analysis and recall some results obtained in \cite{Delgado2023} for spaces of solutions of the Vekua equation.
In Section \ref{sec:integral}, we establish a generalized Borel-Pompeiu integral formula involving the Vekua operators $D-\alpha C$ and $D-M^{\alpha}C$, see Theorem \ref{th:Cauchy-theorem}. Moreover, a partial Cauchy integral formula is obtained
\begin{align}\label{eq:scalar-Cauchy}
  \text{Sc }\int_{\partial\Omega} \Phi(y-x)\, \eta(y) \, w(y) \,ds_{y}=\left\{
   \begin{array}{ll}
      w_0(x), \ & x\in \Omega,\\
      0,           & x\in \R^n\setminus \overline{\Omega},
   \end{array}
\right.
\end{align}
where $\Phi$ is a fundamental solution of $D-M^{\alpha}C$, $w$ is a solution of the Vekua equation \eqref{eq:Vekua_equation}, and $w_0$ the scalar part of $w$, see Corollary \ref{cor:Cauchy}. Moreover, Subsection \ref{subsec:main-Vekua} is devoted to the main Vekua equation and we enunciate some different integral formulas where naturally appear the associated Dirichlet-to-Neumann maps associated to the conductivity and to the Schr\"{o}dinger equation, respectively.


In Section \ref{sec:uniqueness}, we revisit the uniqueness problem for the Calderón problem for conductivities in $W^{2,\infty}(\Omega)$ and $W^{1,\infty}(\Omega)$, respectively; and we propose new perspectives to its analysis. Our methods offer alternative proofs of well-known results while providing a framework that may extend to broader classes of conductivities. 

In Proposition \ref{prop:difference-potentials} we generalize the classical identity in terms of fundamental solutions of the Schr\"{o}dinger operator
\begin{align*}
\int_{\Omega} \left(\frac{\Delta g}{g}-\frac{\Delta f}{f}\right) w_{0,f}(y)\theta_g(y-x)\, dy&=\int_{\Omega} \left(\frac{\Delta g}{g}-\frac{\Delta f}{f}\right) w_{0,g}(y)\theta_f(y-x)\, dy \\
&=\left\{
   \begin{array}{ll}
      w_{0,f}(x)-w_{0,g}(x), \ & x\in \Omega,\\
      0,           & x\in \R^n\setminus \overline{\Omega},
   \end{array}
\right.
\end{align*}
where $w_{0,f}, w_{0,g}$ are solutions and $\theta_f, \theta_g$ are fundamental solutions of the Schr\"{o}dinger equation with potential $q_f=\Delta f/f$ and $q_g=\Delta g/g$, respectively.

Besides, we have proved in Proposition \ref{prop:extensions-equal} that the unique extensions in $H^1(\Omega)$ as solutions of the conductivity equation are uniquely determined by the boundary measurements provided by the Dirichlet-to-Neumann map, complementing the identifiability result given in \cite{Sylvester-Uhlmann} that establishes that the complex geometrical solutions are equal in the exterior domain $\mathbb{R}^n\setminus \overline{\Omega}$. 

The aim of this paper is to give an alternative proof of the uniqueness result for the Calderón problem with Lipschitz conductivities. More precisely, we prove 
\begin{theorem}[\textbf{Uniqueness result}]\label{th:uniqueness}
Let $n\geq 3$. Let $f, g\in W^{1,\infty}(\Omega)$ be conductivities away from zero in $\Omega$. If $\Lambda_f=\Lambda_g$, then $f=g$ in $\Omega$.
\end{theorem}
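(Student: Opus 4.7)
The plan is to implement the classical Calderón strategy inside the Vekua--Clifford framework developed in Sections \ref{sec:Preliminaries}--\ref{sec:integral}. First I would use Proposition \ref{prop:extensions-equal} to justify treating $w_{0,f}$ and $w_{0,g}$ on a common extension domain, and then apply Proposition \ref{prop:difference-potentials} at poles $x\in\R^n\setminus\overline{\Omega}$, obtaining
\begin{equation*}
\int_\Omega \left(\frac{\Delta g}{g}-\frac{\Delta f}{f}\right) w_{0,f}(y)\,\theta_g(y-x)\, dy = 0.
\end{equation*}
Since $\theta_g(\cdot-x)$ is, for each such $x$, a Schrödinger solution with potential $q_g$ in $\Omega$, and such functions are dense in that solution space by a Runge-type approximation, the identity upgrades to the Vekua analogue of Alessandrini's identity: $\int_\Omega (q_g - q_f)\, w_{0,f}\, w_{0,g}\, dy = 0$ for every pair of scalar Vekua solutions $w_{0,f}, w_{0,g}$.

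Second, I would construct complex geometric optics (CGO) solutions of the main Vekua equation. For $\rho_1,\rho_2\in\C^n$ with $\rho_j\cdot\rho_j=0$, $\rho_1+\rho_2=i\xi$, and $|\rho_j|$ large, the target is a pair of Vekua solutions whose scalar parts satisfy
\begin{equation*}
w_{0,f}(y) = e^{\rho_1\cdot y}\bigl(1+r_1(y)\bigr), \qquad w_{0,g}(y) = e^{\rho_2\cdot y}\bigl(1+r_2(y)\bigr),
\end{equation*}
with $\|r_j\|_{L^2(\Omega)}\to 0$ as $|\rho_j|\to\infty$. Existence reduces to solvability of $(D-M^{\alpha}C)\,r=h$ with weighted estimates, which the generalized Borel--Pompeiu formula of Theorem \ref{th:Cauchy-theorem} supplies once one conjugates the Cauchy--Riemann operator $D$ by $e^{-\rho\cdot y}$ and invokes standard Carleman-type bounds.

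Third, substituting these CGOs into the Alessandrini-type identity and passing $|\rho_j|\to\infty$ yields
\begin{equation*}
\widehat{(q_f-q_g)\chi_\Omega}(\xi) = \lim_{|\rho_j|\to\infty}\int_\Omega (q_f-q_g)(y)\,e^{i\xi\cdot y}(1+r_1)(1+r_2)\, dy = 0
\end{equation*}
for every $\xi\in\R^n$, hence $q_f=q_g$ in $\Omega$. Boundary determination from $\Lambda_f=\Lambda_g$ provides $f=g$ on $\partial\Omega$, and combining this with $\Delta f/f=\Delta g/g$ closes the argument: the difference $h=f-g$ satisfies a linear second-order equation with vanishing boundary Cauchy data, forcing $h\equiv 0$ by standard unique continuation.

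The main obstacle is the CGO construction at Lipschitz regularity, since $q_f=\Delta f/f$ is only distributional for $f\in W^{1,\infty}(\Omega)$ and the classical Sylvester--Uhlmann remainder estimate breaks down. The remedy I would pursue is to route through the Clifford Beltrami equation \eqref{eq:Clifford-Beltrami} of Proposition \ref{prop:Beltrami equivalence}, whose coefficient $(1-f^2)/(1+f^2)$ depends on $f$ only algebraically, and to borrow the averaged $L^2$ bounds of Haberman--Tataru \cite{Haberman-Tataru} and Caro--Rogers \cite{Caro-Rogers}, thereby keeping every occurrence of $q_f$ inside an integral where the distributional interpretation afforded by Proposition \ref{prop:difference-potentials} remains meaningful.
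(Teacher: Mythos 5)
There is a genuine gap, and it sits exactly where you place the weight of the argument. Your route is the classical one: derive an Alessandrini-type identity, feed in a product of CGO solutions, recover $q_f=q_g$, and then descend to $f=g$. But the identity you start from, Proposition \ref{prop:difference-potentials}, is proved only for $f,g\in W^{2,\infty}(\Omega)$: it needs the potentials $q_f=\Delta f/f$, $q_g=\Delta g/g$ in $L^{\infty}(\Omega)$ and fundamental solutions $\theta_f,\theta_g$ of the corresponding Schr\"{o}dinger operators, none of which are available for merely Lipschitz conductivities, so it cannot be invoked under the hypotheses of the theorem. Your proposed repair does not close this: Theorem \ref{th:Cauchy-theorem} is an exact integral identity and carries no decay information after conjugation by $e^{-\rho\cdot y}$, so it does not ``supply'' the weighted remainder estimates $\|r_j\|_{L^2(\Omega)}\to 0$; and if you instead import the averaged estimates of Haberman--Tataru or Caro--Rogers wholesale, you are importing the very theorem you are trying to prove. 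The closing step also overstates what you have: with $q_f=q_g$ and $f|_{\partial\Omega}=g|_{\partial\Omega}$ one concludes by uniqueness of the Dirichlet problem for the Schr\"{o}dinger equation (well-posed for potentials coming from conductivities), not by unique continuation from Cauchy data --- a minor point, but symptomatic of the argument running on $W^{2,\infty}$ rails.

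The paper's proof avoids the potentials entirely, which is the whole point of working at Lipschitz regularity. The engine is Proposition \ref{prop:extensions-equal}: if $\Lambda_f=\Lambda_g$, then the interior solutions with common boundary data satisfy $w_{0,f}/f=w_{0,g}/g$ in $\Omega$, established via the integral representation \eqref{eq:rewrite} and a Newton-potential/Fourier argument rather than the Schr\"{o}dinger reduction. One then takes the P\"{a}iv\"{a}rinta--Panchenko--Uhlmann CGO solutions of Theorem \ref{th:CGO}, which do exist for $W^{1,\infty}$ conductivities, writes $e^{x\cdot\zeta}\left(f^{-1}+\psi_f\right)=e^{x\cdot\zeta}\left(g^{-1}+\psi_g\right)$ in $\Omega$, and lets $|\zeta|\to\infty$ to read off $f^{-1}=g^{-1}$ directly from the decay \eqref{eq:limits}. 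You cite Proposition \ref{prop:extensions-equal} only to ``justify a common extension domain''; its actual content is the key lemma, and once it is used the Alessandrini identity, the recovery of $q_f-q_g$, and the Carleman machinery all become unnecessary.
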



\section{Preliminaries}\label{sec:Preliminaries}
Let us consider the real Clifford algebra $\mathcal{C}l_{0,n}$ generated by the elements $e_0=1,e_1,e_2,\cdots,e_n$ with the relation $e_ie_j+e_je_i=-2\delta_{ij}$, where 
$\delta_{ij}$ is the Kronecker delta function $(i,j=1,2,\cdots,n)$. We define the \textit{conjugation} in $\mathcal{C}l_{0,n}$ as usual $\overline{ab}=\overline{b}\overline{a}$, $\forall a,b\in \mathcal{C}l_{0,n}$.
Let us denote by $a=\sum_{A} a_A e_A\in \mathcal{C}l_{0,n}$ the elements in the real Clifford algebra, and let us define the following projections for $0\leq k\leq n$ as follows
\begin{equation*}
[a]_k=\sum_{|A|=k} a_A\, e_A.
\end{equation*}
Thus, an arbitrary element $a\in \mathcal{C}l_{0,n}$ can be written as
\begin{equation*}
a=[a]_0+[a]_1+\cdots +[a]_n.
\end{equation*}
We will define the scalar, vector, paravector, and the non-paravector part of $a$ as follows
\begin{align*}
\text{Sc }a=[a]_0,\quad \text{Vec }a=[a]_1, \quad \text{Pa }a=[a]_0+[a]_1, \quad \text{ and } \quad \text{NPa }a=[a]_2+\cdots+[a]_n, 
\end{align*}
respectively. For short, we use $a_0=\text{Sc }a$ and $\vec a=\text{Vec }a$. We embed the Euclidean space $\R^{n+1}$ in $\mathcal{C}l_{0,n}$ through the identification of $x^*=(x_0,x_1,\cdots,x_n)\in \R^{n+1}$ with the Clifford \textit{paravector} $x^*=x_0+\sum_{i=1}^n x_ie_i$. 
Through all the manuscript $\Omega\subset \R^n$ will be a bounded domain with Lipschitz boundary, and the elements $x\in \Omega$ will be called \textit{vectors} with $x=\sum_{i=1}^n x_ie_i$.

 In this work, we will work with $\mathcal{C}l_{0,n}$-valued functions defined in $\Omega$, $w \colon \Omega \to \mathcal{C}l_{0,n}$ denoted by
\[ 
w(x)=\sum_A w_A(x)\, e_A, \quad x\in \Omega,
\]
where the coordinates $w_A$ are real-valued functions defined in $\Omega$, that is $w_A\colon \Omega \to \R$. 
In particular, paravector-valued functions are denoted by $w(x)=\sum_{i=0}^n w_i(x) e_i$. For details on Clifford analysis, see the monographs \cite{BDS1982, GuSpr1997, GuHaSpr2008}.

Let us denote by $L^p(\Omega,C\ell_{0,n})$ the set of $C\ell_{0,n}$-valued functions defined in $\Omega$ which are $p$-integrable $1<p<\infty$. In particular, $L^2(\Omega,C\ell_{0,n})$ is a right Hilbert $C\ell_{0,n}$-module under the inner product
\begin{align}\label{eq:inner-product}
  \langle u,v\rangle_{L^2(\Omega)}=\int_{\Omega}\overline{u} v \, dy, \quad \forall u,v\in L^2(\Omega,C\ell_{0,n}).
\end{align}
Since we consider spaces of solutions which are not subspaces over $C\ell_{0,n}$, the following inner product is considered
\begin{equation}\label{eq:scalar-product}
   \langle u,v\rangle_{0,L^2}=\text{Sc }\int_{\Omega} \overline{u}v\, dy. 
\end{equation}
The \textit{Moisil-Teodorescu} differential operator, defined by 
\begin{align}\label{eq:Moisil-Teodorescu}
  D=\sum_{i=1}^n e_i \, \partial_i,
\end{align}
where $\partial_i$ means the partial derivative with respect to the variable $x_i$. We will say that $w\in C^1(\Omega,C\ell_{0,n})$ is \textit{monogenic} if $D w=0$ in $\Omega$. By the relation $\Delta=-D^2$, it is straightforward that any monogenic function is harmonic component-wise. 
We will use the following properties throughout this work
\[
\text{Sc }Dw=-\div \vec w, \qquad \div \Vec Dw=0, \qquad D(\nabla w_0)=-\Delta w_0,
\]
where $\text{div}$ is the classical divergence operator in $\mathbb{R}^n$.
For $n\geq 3$. Let us define the Cauchy kernel $E$ as follows
\begin{equation}\label{eq:Cauchy-kernel}
E(x)=-\frac{x}{\sigma_n|x|^n}, \quad x\in \R^n\setminus\{0\},
\end{equation}
and $\sigma_n$ is the surface area of the unit sphere in $\R^n$.

We define the \textit{Teodorescu transform} \cite{GuHaSpr2008}, for $w\in L^p(\Omega,C\ell_{0,n})$ as follows
\begin{align}\label{eq:Teodorescu_operator}
   T_{\Omega}[w](x)=-\int_{\Omega} 
	  E(y-x) w(y)\, dy, \quad x\in\R^n,
\end{align}
where the Cauchy kernel $E$ was previously defined in \eqref{eq:Cauchy-kernel}. Moreover, $T_{\Omega}\colon L^p(\Omega, C\ell_{0,n})\to L^p(\Omega, C\ell_{0,n})$ is a bounded operator and it is a right inverse of the Moisil-Teodorescu operator $D$. 

\subsection{Generalized Vekua spaces}\label{sec:Bergman-Vekua spaces}
Let us denote by $A^p(\Omega,C\ell_{0,n})$ the subspace of monogenic functions in $L^p(\Omega,C\ell_{0,n})$, $1<p<\infty$, which is also a right $C\ell_{0,n}$-module. Due to $A^p(\Omega,C\ell_{0,n})$ is a closed subspace in $L^p(\Omega, C\ell_{0,n})$ $1<p<\infty$ \cite[Prop.\ 3.73]{GuSpr1997}.
Therefore, the right Hilbert $C\ell_{0,n}$-module $L^2(\Omega, C\ell_{0,n})$ allows an orthogonal decomposition
\[
   L^2(\Omega, C\ell_{0,n})=A^2(\Omega,C\ell_{0,n})\oplus DW_0^{1,2}(\Omega,C\ell_{0,n}),
\]
under the scalar product \eqref{eq:inner-product}.




Let $1<p<\infty$ and let $\alpha\in L^{\infty}(\Omega,C\ell_{0,n})$ be a bounded function on $\Omega$. Following \cite{Delgado2023}, we define the \textit{generalized Vekua space} $A_{\alpha}^p(\Omega,C\ell_{0,n})$ to consist of $C\ell_{0,n}$-valued functions $w \in L^p(\Omega,C\ell_{0,n})$ satisfying the Vekua equation \eqref{eq:Vekua_equation} in the sense of distributions. Some monographs concerning Vekua theory are \cite{Vekua1962, Bers1953, Krav2009}

Let $M^{\alpha}$ be the right-hand side multiplication operator, $M^{\alpha}[w]:=w\, \alpha$. In \cite[Th.\ 11]{Delgado2023} it was proved that the Hilbert space $L^2(\Omega,C\ell_{0,n})$ allows the orthogonal decomposition 
\begin{align}\label{eq:Hodge-2}  
L^2(\Omega,C\ell_{0,n})=A_{\alpha}^2(\Omega,C\ell_{0,n})\oplus (D-M^{\alpha}C) W_0^{1,2}(\Omega,C\ell_{0,n}),
\end{align} 
where $C$ represents the Clifford conjugation operator, and the orthogonality is considered with respect to the scalar product \eqref{eq:scalar-product}.

In the literature, \eqref{eq:Hodge-2} is also called a \textit{Hodge decomposition} for generalized Vekua spaces.
One key point for the analysis of generalized Vekua spaces was the good properties of the operator $S_{\alpha}$ defined in terms of the Teodorescu transform as follows
\begin{align}\label{eq:invariant-operator}
S_{\alpha}\colon A_{\alpha}^2(\Omega,C\ell_{0,n})&\rightarrow A^2(\Omega,C\ell_{0,n})\\
\nonumber S_{\alpha}&=I-T_{\Omega}[\alpha C], 
\end{align}
which sends solutions of the Vekua equation \eqref{eq:Vekua_equation} into monogenic functions in $\Omega$. In the following, we will analyze some integral representation formulas associated with these Vekua operators $D-\alpha C$ and $D-M^{\alpha}C$.


\section{Integral representation formulas}\label{sec:integral}
The following integral formulas \eqref{eq:scalar-BP}, \eqref{eq:scalar-BP-2} are partial generalizations of the Borel-Pompeiu formula in Clifford analysis; see, for instance, \cite[Th.\ 7.8]{GuHaSpr2008}, which establishes that any $v\in C^1(\overline{\Omega})$ satisfies
\begin{align}\label{eq:formula_BP}
   T_{\Omega}[Dv](x)+\int_{\partial\Omega}E(y-x)\eta(y)v(y)\, ds_y= \left\{
   \begin{array}{ll}
      v(x), \ & x\in \Omega,\\
      0,           & x\in \R^n\setminus \overline{\Omega},
   \end{array}
\right.
\end{align} 
where the Teodorescu operator was previously defined in \eqref{eq:Teodorescu_operator}. In contrast to \eqref{eq:formula_BP}, we would be able to recover only the scalar part of the solutions of the Vekua equation \eqref{eq:Vekua_equation}.

Let $\overline{\Phi}$ and $\overline{\Psi}$ be normalized fundamental solutions of $D-M^{\alpha}C$ and $D-\alpha C$, respectively. Since we do not know explicit forms of fundamental solutions for the Vekua operator $D-\alpha C$ nor for the operator $D-M^{\alpha}C$, we will work with the following standard constraint. Let $\epsilon>0$ and let us consider $B(x;\epsilon)=\{y\in \Omega \colon  |x-y|<\epsilon\}$ such that $\overline{B(x;\epsilon)}\subset \Omega$, then for all $v\in C^1(\overline{\Omega})$
\begin{align}\label{constraint-1}
   \lim_{\epsilon\rightarrow 0} \int_{\partial B(x;\epsilon)} \Phi(y-x)\eta(y)v(y)\, ds_y=v(x),\\
   \lim_{\epsilon\rightarrow 0} \int_{\partial B(x;\epsilon)} \Psi(y-x)\eta(y)v(y)\, ds_y=v(x).\label{constraint-2}
\end{align}
\begin{theorem}\label{th:Cauchy-theorem}
 Let $\Omega\subset \mathbb{R}^n$ be a bounded domain with Lipschitz boundary. Suppose that $ u, v\in C^1(\overline{\Omega})$, and $\alpha\in L^{\infty}(\Omega)$. Then the following formulas are fulfilled
\begin{align}
   \text{Sc }\int_{\partial\Omega} u\, ds_y^* \, v&=\text{Sc }\int_{\Omega} u [ (D-\alpha C)v]-
   [(D-M^{\alpha}C)\overline{u}]\, \overline{v} \, dy\label{eq:integral-formula-1}\\
   \text{Sc }\int_{\partial\Omega} u\, ds_y^* \, v&=\text{Sc }\int_{\Omega} u [(D-M^{\alpha}C)v]-
   [(D-\alpha C)\overline{u}]\, \overline{v}\, dy.\label{eq:integral-formula-2}
\end{align}
Moreover, if $-\overline{\Phi}$ is a fundamental solution of the operator $D-M^{\alpha}C$ satisfying \eqref{constraint-1}, then we can recover the scalar part of $v$, $v_0$, as follows
\begin{align}\label{eq:scalar-BP}
\Sc\, \int_{\partial\Omega}\Phi(y-x) \eta(y) v(y)\, ds_{y}-\Sc\, \int_{\Omega} \Phi(y-x) [ (D-\alpha C)v] \, dy=\left\{
   \begin{array}{ll}
      v_0(x), \ & x\in \Omega,\\
      0,           & x\in \R^n\setminus \overline{\Omega}.
   \end{array}
\right. 
\end{align}
Analogously,  if $-\overline{\Psi}$ is a fundamental solution of the operator $D-\alpha C$ satisfying \eqref{constraint-2}, then 
\begin{align}\label{eq:scalar-BP-2}
  \Sc\, \int_{\partial\Omega}\Psi(y-x) \eta(y) v(y)\, ds_{y}-\Sc\, \int_{\Omega} \Psi(y-x) [ (D-M^{\alpha} C)v] \, dy=\left\{
   \begin{array}{ll}
      v_0(x), \ & x\in \Omega,\\
      0,           & x\in \R^n\setminus \overline{\Omega}.
   \end{array}
\right. 
\end{align}
\end{theorem}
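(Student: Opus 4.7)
The plan is to first establish the integration-by-parts identities \eqref{eq:integral-formula-1} and \eqref{eq:integral-formula-2} from the classical Stokes formula in Clifford analysis, and then to deduce the Borel--Pompeiu-type formulas \eqref{eq:scalar-BP} and \eqref{eq:scalar-BP-2} by specializing to a fundamental solution and excising a small ball around the singularity.

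For \eqref{eq:integral-formula-1}, I would start from the standard Clifford Stokes identity
$$\int_{\partial\Omega} u\,\eta\,v\,ds_y = \int_{\Omega}\bigl[(uD)v + u(Dv)\bigr]\,dy,$$
where $(uD):=\sum_i (\partial_i u)\,e_i$. Taking scalar parts and invoking the pointwise identity $\text{Sc}[(uD)v] = -\text{Sc}[(D\bar u)\bar v]$---which follows from $\bar D = -D$, the product rule $\overline{ab}=\bar b\bar a$, the fact that $\partial_i$ commutes with conjugation, and the cyclic property $\text{Sc}(ab)=\text{Sc}(ba)$---the right-hand side rewrites as $\text{Sc}\int_{\Omega}[u(Dv) - (D\bar u)\bar v]\,dy$. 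Inserting the Vekua operators on the right-hand side of \eqref{eq:integral-formula-1} produces the extra terms $-u(\alpha\bar v) + (u\alpha)\bar v$, which cancel by associativity; for \eqref{eq:integral-formula-2} the extra terms take the form $-u\bar v\alpha + \alpha u\bar v$, which cancel inside $\text{Sc}$ by the cyclic property. This delivers both identities simultaneously.

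For \eqref{eq:scalar-BP}, I would apply \eqref{eq:integral-formula-1} on the pierced domain $\Omega_\epsilon := \Omega\setminus\overline{B(x;\epsilon)}$ with $u(y):=\Phi(y-x)$ and arbitrary $v\in C^1(\overline{\Omega})$. Since $-\overline{\Phi}$ is a fundamental solution of $D-M^{\alpha}C$, the interior term $[(D-M^{\alpha}C)\bar\Phi]\bar v$ vanishes identically on $\Omega_\epsilon$, and Stokes splits the boundary contribution as
$$\text{Sc}\int_{\partial\Omega}\Phi\,\eta\,v\,ds_y - \text{Sc}\int_{\partial B(x;\epsilon)}\Phi\,\eta\,v\,ds_y = \text{Sc}\int_{\Omega_\epsilon}\Phi\bigl[(D-\alpha C)v\bigr]\,dy.$$
Letting $\epsilon \to 0$ and using \eqref{constraint-1} to evaluate the inner integral as $v(x)$, whose scalar part is $v_0(x)$, yields the interior case of \eqref{eq:scalar-BP}. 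When $x\in \R^n\setminus\overline{\Omega}$, no excision is needed and \eqref{eq:integral-formula-1} applies directly on $\Omega$, producing zero. Formula \eqref{eq:scalar-BP-2} follows by the symmetric argument, using \eqref{eq:integral-formula-2} with $u(y):=\Psi(y-x)$ and the constraint \eqref{constraint-2}.

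The main obstacle is the algebraic step $\text{Sc}[(uD)v] = -\text{Sc}[(D\bar u)\bar v]$; once this is in hand, every subsequent cancellation is either associative or cyclic. A secondary technical point is justifying distributionally that $(D-M^{\alpha}C)(-\bar\Phi) = \delta_x$ under the sole hypothesis $\alpha \in L^{\infty}(\Omega)$, but this is precisely what \eqref{constraint-1}--\eqref{constraint-2} abstract away.
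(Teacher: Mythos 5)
Your proposal is correct and follows essentially the same route as the paper: both derive \eqref{eq:integral-formula-1}--\eqref{eq:integral-formula-2} from the Clifford Stokes/Gauss identity via $\text{Sc}[(uD)v]=-\text{Sc}[(D\overline{u})\overline{v}]$ (the paper inserts the term $u\alpha\overline{v}$ and regroups, which is the same cancellation you describe), and both obtain \eqref{eq:scalar-BP}--\eqref{eq:scalar-BP-2} by applying these identities on $\Omega\setminus\overline{B(x;\epsilon)}$ with $u=\Phi(y-x)$ (resp.\ $\Psi(y-x)$), using that $(D-M^{\alpha}C)\overline{\Phi}$ vanishes away from the singularity and invoking \eqref{constraint-1}--\eqref{constraint-2} in the limit $\epsilon\to 0$. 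No gaps beyond those already present in the paper's own argument.
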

\begin{proof}
Let $x\in \Omega$ be a fixed point. Let $\Omega^*\subseteq \Omega$. Applying Gauss's theorem to $u, v$ \cite[Th.\ A.2.22]{GuHaSpr2008} we get
\begin{align*}
\int_{\partial\Omega^*} u\,\eta \, v\, ds_y&=\int_{\Omega^*} \left((uD)v+u(Dv)\right)\, dy =\int_{\Omega^*} \left((uD)v+u\alpha\overline{v}+u(Dv-\alpha \overline{v})\right)\, dy.
\end{align*}
Using that $\Sc((uD)v)=\Sc(-D\overline{u}\,\overline{v})$ and taking the scalar part of the previous expression, we get that
\begin{align*}
Sc \, \int_{\partial\Omega^*} u(y)\eta(y) v(y)\, ds_y&=\Sc\, \int_{\Omega^*} [(-D+M^{\alpha}C)\overline{u}]\, \overline{v}+u(D-\alpha C)v \, dy,
\end{align*}
which coincides with \eqref{eq:integral-formula-1}. Analogously, we can obtain \eqref{eq:integral-formula-2}.
Choose $\epsilon>0$ such that $\overline{B(x;\epsilon)}\subset \Omega$, where $B(x;\epsilon)=\{y\in \Omega \colon  |x-y|<\epsilon\}$. Applying \eqref{eq:integral-formula-1} with $u(y)=\Phi(y-x)$ in the domain $\Omega_{\epsilon}:=\Omega\setminus \overline{B(x;\epsilon)}$, we get
\begin{align*}
\Sc \, \int_{\partial\Omega_{\epsilon}}\Phi(y-x) \eta(y) v(y)\, ds_{y}=\Sc \, \int_{\Omega_{\epsilon}} \Phi(y-x) [ (D-\alpha C)v]-
   [(D-M^{\alpha}C)\overline{\Phi}(y-x)]\, \overline{v} \, dy.
\end{align*}
Since $D\overline{\Phi}(y-x)=\Phi(y-x) \alpha$ in $\Omega_{\epsilon}$, then the previous expression reduces to
\begin{align}\label{eq:reduction}
\Sc \int_{\partial\Omega_{\epsilon}}\Phi(y-x) \eta(y) v(y)\, ds_{y}=\Sc \int_{\Omega_{\epsilon}} \Phi(y-x) [ (D-\alpha C)v] \, dy.
\end{align}
Taking the limit when $\epsilon\rightarrow 0$ and using \eqref{constraint-1}, we get
\begin{align*}
\lim_{\epsilon \rightarrow 0}\int_{\partial\Omega_{\epsilon}}\Phi(y-x) \eta(y) v(y)\, ds_y=\int_{\partial\Omega}\Phi(y-x) \eta(y) v(y)\, ds_y-v(x).
\end{align*}
Then, taking the limit on both sides of \eqref{eq:reduction}, we readily obtain \eqref{eq:scalar-BP}. Similarly, applying \eqref{eq:integral-formula-2} with $u(y)=\Psi(y-x)$ in the domain $\Omega_{\epsilon}:=\Omega\setminus \overline{B(x;\epsilon)}$ we get \eqref{eq:scalar-BP-2} as we desired.
\end{proof}
The following result shows that we can recover the scalar part of any solution of the Vekua equation \eqref{eq:Vekua_equation} when we know the boundary value of the solution, so we can see this result as a partial generalization of the classical Cauchy integral formula for monogenic functions.   
\begin{corollary}\label{cor:Cauchy}
Under the same hypothesis as Theorem \ref{th:Cauchy-theorem}. If $-\overline{\Phi}$ is a fundamental solution of the operator $D-M^{\alpha}C$ satisfying \eqref{constraint-1}, and $w\in\text{Ker }(D-\alpha C)$ whose boundary values are well-defined, then we have a Cauchy-type integral formula in $\Omega$
\begin{align}\label{eq:scalar-Vekua}
  \text{Sc }\int_{\partial\Omega} \Phi(y-x)\, \eta(y) \, w(y) \,ds_{y}=\left\{
   \begin{array}{ll}
      w_0(x), \ & x\in \Omega,\\
      0,           & x\in \R^n\setminus \overline{\Omega},
   \end{array}
\right.
\end{align}
where $w_0(x)=\text{Sc }w(x)$. Analogously,  if $-\overline{\Psi}$ is a fundamental solution of the operator $D-\alpha C$, satisfying \eqref{constraint-2}, and $w^*\in \text{Ker }(D-M^{\alpha}C)$ whose boundary values are well-defined, then we have a Cauchy-type integral formula in $\Omega$
\begin{align}\label{eq:scalar-adjoint}
  \text{Sc }\int_{\partial\Omega} \Psi(y-x)\, \eta(y) \, w^*(y) \,ds_{y}=\left\{
   \begin{array}{ll}
      w_0^*(x), \ & x\in \Omega,\\
      0,           & x\in \R^n\setminus \overline{\Omega},
   \end{array}
\right.
\end{align}
where $w_0^*(x)=\text{Sc }w^{*}(x)$.
\end{corollary}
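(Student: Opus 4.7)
The plan is to deduce both Cauchy-type formulas directly from the Borel--Pompeiu formulas \eqref{eq:scalar-BP} and \eqref{eq:scalar-BP-2} of Theorem \ref{th:Cauchy-theorem}, by exploiting the defining kernel condition on $w$ and $w^*$ to kill the volume term.

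For \eqref{eq:scalar-Vekua}, I would take $v=w$ in \eqref{eq:scalar-BP}. Since $w\in\text{Ker}(D-\alpha C)$, we have $(D-\alpha C)w=0$ almost everywhere in $\Omega$, hence the integrand $\Phi(y-x)[(D-\alpha C)v]$ vanishes identically on $\Omega$. The volume integral in \eqref{eq:scalar-BP} therefore drops out, and the boundary term alone equals $w_0(x)$ for $x\in\Omega$ and $0$ for $x\in\R^n\setminus\overline{\Omega}$. The proof of \eqref{eq:scalar-adjoint} is entirely symmetric: apply \eqref{eq:scalar-BP-2} with $v=w^*$, and use that $w^*\in\text{Ker}(D-M^{\alpha}C)$ to annihilate the volume integral.

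The only genuine technical point is a regularity mismatch: Theorem \ref{th:Cauchy-theorem} is stated for $v\in C^1(\overline{\Omega})$, whereas the corollary only assumes that $w$ has well-defined boundary values. To bridge this, I would approximate $w$ in an appropriate $L^p$ or Sobolev sense by functions $w_k\in C^1(\overline{\Omega})\cap \text{Ker}(D-\alpha C)$ (using, e.g., the Hodge decomposition \eqref{eq:Hodge-2} together with density of smooth monogenic functions and the boundedness of the operator $S_\alpha$ defined in \eqref{eq:invariant-operator}), apply \eqref{eq:scalar-BP} to each $w_k$, and pass to the limit. The pointwise identities \eqref{constraint-1}, \eqref{constraint-2} and standard estimates on the singular kernels $\Phi,\Psi$ (which behave like the Cauchy kernel $E$) ensure that both the boundary integral and the scalar trace on the right-hand side converge.

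The main obstacle, if any, is the first-sheet handling of the fundamental solution in the exterior case $x\in\R^n\setminus\overline{\Omega}$: one must justify that $\Phi(y-x)$ is regular in $y\in\overline{\Omega}$ so that no limiting procedure around a small ball $B(x;\epsilon)$ is needed. Since $x\notin\overline{\Omega}$ implies $\Phi(\,\cdot\,-x)\in C^\infty(\overline{\Omega})$ and $(D-\alpha C)w=0$ in $\Omega$, the argument of Theorem \ref{th:Cauchy-theorem} can be applied directly on $\Omega$ (rather than $\Omega_\epsilon$), producing the value $0$ on the right-hand side. Combining the interior and exterior cases yields the claimed formulas, and the dual identity \eqref{eq:scalar-adjoint} follows by the same reasoning with $\Phi,D-\alpha C,w$ replaced by $\Psi,D-M^{\alpha}C,w^*$.
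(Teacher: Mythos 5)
Your proposal is correct and is essentially the paper's own (implicit) argument: the corollary is obtained by taking $v=w$ in \eqref{eq:scalar-BP} (resp.\ $v=w^*$ in \eqref{eq:scalar-BP-2}) so that the volume integral vanishes because $(D-\alpha C)w=0$ (resp.\ $(D-M^{\alpha}C)w^*=0$), which is why the paper states the corollary without a separate proof. Your additional remarks on approximating $w$ by $C^1(\overline{\Omega})$ solutions and on the regularity of $\Phi(\cdot-x)$ for $x\notin\overline{\Omega}$ are reasonable touches that the paper glosses over, but they do not change the route.
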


\begin{corollary}\label{cor:Cauchy-fundamental}
Under the same hypothesis than Theorem \ref{th:Cauchy-theorem}. If $h_0$ is a scalar fundamental solution of $-\Delta+|\alpha|^2+\div \vec \alpha$, with $D\alpha=-\div \vec \alpha$, then
\begin{align*}
  \text{Sc }\int_{\partial\Omega} (\nabla+\overline{\alpha})h_0(y-x)\, \eta(y) \, w(y) \,ds_{y}=\left\{
   \begin{array}{ll}
      w_0(x), \ & x\in \Omega,\\
      0,           & x\in \R^n\setminus \overline{\Omega},
   \end{array}
\right.
\end{align*}
for any $w$ solution of the Vekua equation \eqref{eq:Vekua_equation} whose boundary values are well-defined.
\end{corollary}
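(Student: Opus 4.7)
The plan is to apply the Cauchy-type formula \eqref{eq:scalar-Vekua} from Corollary \ref{cor:Cauchy} with the explicit choice $\Phi(y-x) := (\nabla + \overline\alpha) h_0(y-x)$. The entire corollary then reduces to two verifications: that $-\overline\Phi$ is a fundamental solution of the operator $D - M^\alpha C$, and that $\Phi$ satisfies the singular-limit normalization \eqref{constraint-1}.

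For the first verification, I would conjugate $\Phi$: since $h_0$ is real-scalar and $\overline{e_i} = -e_i$, one finds $\overline\Phi = -\nabla h_0 + \alpha h_0$. Applying $D$ and using $D^2 = -\Delta$ together with the Leibniz-type expansion
\[
D(\alpha h_0) \;=\; (D\alpha) h_0 + \sum_i e_i \alpha\, \partial_i h_0 \;=\; (D\alpha) h_0 + (\nabla h_0)\alpha,
\]
which is valid because the scalar factors $\partial_i h_0$ commute through the Clifford product, one obtains $D\overline\Phi = \Delta h_0 + (D\alpha)h_0 + (\nabla h_0)\alpha$. Expanding $\Phi\alpha = (\nabla h_0)\alpha + h_0\,\overline\alpha\alpha$, the cross terms $(\nabla h_0)\alpha$ cancel in $-D\overline\Phi + \Phi\alpha$. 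Invoking the hypothesis $D\alpha = -\div\vec\alpha$ and the identity $\overline\alpha\alpha = |\alpha|^2$ (valid in the paravector setting natural to the Vekua equation), what remains is exactly $(-\Delta + |\alpha|^2 + \div\vec\alpha)h_0 = \delta$. Hence $(D - M^\alpha C)(-\overline\Phi) = \delta$.

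For the second verification, observe that $-\Delta + |\alpha|^2 + \div\vec\alpha$ is a zero-order perturbation of $-\Delta$ with bounded potential, so $h_0$ inherits the leading singularity of the Newton kernel, $h_0(z) \sim c_n |z|^{2-n}$ as $z \to 0$. Therefore $\nabla h_0$ has the Cauchy-kernel-type behavior $z/|z|^n$, while the lower-order term $\overline\alpha h_0 = O(|z|^{2-n})$ contributes only $O(\epsilon)$ over $\partial B(x;\epsilon)$ and vanishes in the limit. The standard spherical-mean argument used to derive the classical Borel--Pompeiu formula \eqref{eq:formula_BP} then yields \eqref{constraint-1} for this $\Phi$, and the conclusion follows by direct substitution into \eqref{eq:scalar-Vekua}.

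The main obstacle is the first verification: the careful Clifford-algebraic bookkeeping of the cross terms in $-D\overline\Phi + \Phi\alpha$, and the implicit reliance on $\alpha$ being (at worst) paravector-valued so that $\overline\alpha\alpha$ is genuinely scalar and every non-scalar residue on the right-hand side cancels identically against the $\delta$ contribution. Once that algebra is in place, applying Corollary \ref{cor:Cauchy} gives the claim with no further work.
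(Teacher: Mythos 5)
Your proposal is correct and follows essentially the same route as the paper: both verify the factorization $(D-M^{\alpha}C)(D-\alpha C)h_0=(-\Delta+|\alpha|^2+\div\vec\alpha)h_0$ (your computation of $-D\overline{\Phi}+\Phi\alpha$ with $\Phi=(\nabla+\overline{\alpha})h_0$ is exactly this identity, since $-\overline{\Phi}=(D-\alpha C)h_0$), conclude that $-\overline{\Phi}$ is a fundamental solution of $D-M^{\alpha}C$, and then invoke the Cauchy-type formula \eqref{eq:scalar-Vekua}. Your additional check that $\Phi$ satisfies the normalization \eqref{constraint-1} via the Newton-kernel asymptotics of $h_0$ is a welcome extra that the paper leaves implicit.
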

\begin{proof}
First, we will verify the following factorization
\begin{align}\label{eq:factorization-2}
    \left(-\Delta+|\alpha|^2-D\alpha\right)h_0=(D-M^{\alpha}C)(D-\alpha C)h_0.
\end{align}
Indeed,
\begin{align*}
(D-M^{\alpha}C)(D-\alpha C)h_0&=(D-M^{\alpha}C)(Dh_0-\alpha h_0)=D^2 h_0-D(\alpha h_0)-\overline{Dh_0}\alpha+\overline{\alpha h_0}\alpha\\
&=-\Delta h_0-D(\alpha) h_0- \nabla h_0\alpha+\nabla h_0\alpha+|\alpha|^2h_0.
\end{align*}

By \eqref{eq:factorization-2}, if $h_0$ is a fundamental solution of $-\Delta+|\alpha|^2-D \alpha=-\Delta+|\alpha|^2+\div \vec \alpha$, then $\overline{\Phi}=(D-\alpha C)h_0$ will be a fundamental solution of $D-M^{\alpha}C$.
The result comes directly from \eqref{eq:factorization-2} and the Cauchy-type integral formula provided in \eqref{eq:scalar-Vekua}.
\end{proof}

\subsection{Integral formulas for the main Vekua equation}\label{subsec:main-Vekua}
From now on, we will consider $\alpha=\nabla f/f$,  with $f$ a scalar function in $W^{1,\infty}(\Omega)$ away from zero (i.e. there exists $C>0$ such that $|f(x)|\geq C>0$ $\forall x\in \Omega$). More precisely, we will analyze the solutions of the \textit{main Vekua equation}
\begin{align}\label{eq:main-Vekua}
 Dw=\frac{\nabla f}{f}\overline{w}, \qquad \text{ in }\Omega.
\end{align}
After seeing the interrelation between the Vekua operator $D-\alpha C$ and the operator $D-M^{\alpha}C$ illustrated in the Hodge decomposition \eqref{eq:Hodge-2} and the previous integral representation formulas (see Theorem \ref{th:Cauchy-theorem}), we would like to analyze the solutions of
\begin{equation}\label{eq:Vekua-adjoint}
  Dw=\overline{w}\frac{\nabla f}{f}, \qquad \text{ in }\Omega.    
\end{equation}

Observe that $D\alpha=D(\nabla f/f)=|\nabla f|^2/f^2-\Delta f/f$. Consequently, the left-hand side of \eqref{eq:factorization-2} reduces to the Schrödinger equation
\begin{equation}\label{eq:factorization-3}
    \left(-\Delta+\dfrac{\Delta f}{f}\right)h_0=\left(D-M^{\frac{\nabla f}{f}}C\right)\left(D-\dfrac{\nabla f}{f} C\right)h_0,
\end{equation}
which in turn is the second-order partial differential equation satisfied by the scalar part of solutions of the main Vekua equation (see \cite[Cor.\ 1]{Delgado2023}). Similar connections between Vekua type operators and Schr\"{o}dinger operator have been considered, for instance, in \cite{Bernstein1996, Krav2006}. Now, we would be able to reconstruct the scalar part of the solutions of the main Vekua equation \eqref{eq:main-Vekua} when we know the boundary values of its solutions.
\begin{corollary}\label{cor:main-Cauchy}
Let $\Omega\subset \mathbb{R}^n$ be a bounded domain with Lipschitz boundary. Let $f\in W^{1,\infty}(\Omega)$ away from zero. If $h_0$ is a fundamental solution of the Schrödinger operator $-\Delta+\Delta f/f$, then
\begin{align*}
\text{Sc }\int_{\partial\Omega} \left(D-\dfrac{\nabla f}{f}\right)h_0(y-x)\, \eta(y) \, w(y) \,ds_{y}=\left\{
   \begin{array}{ll}
      w_0(x), \ & x\in \Omega,\\
      0,           & x\in \R^n\setminus \overline{\Omega},
   \end{array}
\right.
\end{align*}
where $w$ is a solution of the main Vekua equation \eqref{eq:main-Vekua} whose boundary values are well-defined.
\end{corollary}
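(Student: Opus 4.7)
The result will follow by specializing Corollary \ref{cor:Cauchy-fundamental} to the coefficient $\alpha = \nabla f/f$. The main Vekua equation \eqref{eq:main-Vekua} is precisely \eqref{eq:Vekua_equation} with this $\alpha$, so any solution $w$ of \eqref{eq:main-Vekua} lies in $\text{Ker}(D-\alpha C)$, and the earlier corollary applies once I verify its structural hypothesis, identify its scalar second-order operator with the Schrödinger operator $-\Delta + \Delta f/f$, and rewrite the boundary kernel in the form $(D - \nabla f/f)h_0$.

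For the structural hypothesis $D\alpha = -\div \vec{\alpha}$: since $f$ is scalar-valued, $\alpha = \nabla(\log f)$ is a pure vector, so $\vec{\alpha} = \alpha$ and $\overline{\alpha} = -\alpha$; applying the Moisil-Teodorescu operator to the gradient of a scalar gives $D\alpha = D\nabla(\log f) = -\Delta(\log f)$, which is a scalar, while $\div \vec{\alpha} = \div \nabla(\log f) = \Delta(\log f)$, so the required identity holds. For the identification of the Schrödinger potential, I use $\div(\nabla f/f) = \Delta f/f - |\nabla f|^2/f^2$ to obtain
\[
|\alpha|^2 + \div \vec{\alpha} = \frac{|\nabla f|^2}{f^2} + \frac{\Delta f}{f} - \frac{|\nabla f|^2}{f^2} = \frac{\Delta f}{f},
\]
so $-\Delta + |\alpha|^2 + \div \vec{\alpha} = -\Delta + \Delta f/f$ and the given $h_0$ is indeed a fundamental solution of the scalar operator appearing in Corollary \ref{cor:Cauchy-fundamental}.

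For the boundary kernel, since $h_0$ is scalar the Clifford derivative $Dh_0$ coincides with $\nabla h_0$ viewed as a vector, and $\overline{\alpha} = -\nabla f/f$, hence
\[
(\nabla + \overline{\alpha})h_0 = \nabla h_0 - \frac{\nabla f}{f}h_0 = \left(D - \frac{\nabla f}{f}\right)h_0,
\]
which is exactly the integrand in the statement. Substituting into Corollary \ref{cor:Cauchy-fundamental} then yields the claimed Cauchy-type formula. The only delicate point I anticipate is the regularity of the fundamental solution $h_0$: when $f$ is merely in $W^{1,\infty}(\Omega)$ the potential $\Delta f/f$ lives only in $W^{-1,\infty}(\Omega)$, so the existence of such an $h_0$ and the validity of the trace limit \eqref{constraint-1} are not automatic; however, these are absorbed into the standing hypothesis that $h_0$ is a fundamental solution of the Schrödinger operator, so they do not enter the present argument.
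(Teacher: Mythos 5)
Your proposal is correct and follows exactly the route the paper intends: Corollary \ref{cor:main-Cauchy} is obtained by specializing Corollary \ref{cor:Cauchy-fundamental} to $\alpha=\nabla f/f$, using that $D\alpha=|\nabla f|^2/f^2-\Delta f/f$ is scalar (so $D\alpha=-\div\vec\alpha$), that $|\alpha|^2+\div\vec\alpha=\Delta f/f$ turns the scalar operator into the Schr\"odinger operator of \eqref{eq:factorization-3}, and that $(\nabla+\overline{\alpha})h_0=(D-\nabla f/f)h_0$ for scalar $h_0$. Your closing caveat about the existence and trace behaviour of $h_0$ for merely Lipschitz $f$ is apt but, as you note, is absorbed into the standing hypothesis, just as in the paper.
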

We will denote the $C\ell_{0,n}$-valued solutions of \eqref{eq:main-Vekua} and \eqref{eq:Vekua-adjoint} as follows
\[
w=[w]_0+[w]_1+[w]_2+\cdots+[w]_n,
\]
where $[w]_k=\sum_{|A|=k} w_A\, e_A$. Equivalently, we can rewrite it as 
\begin{equation}\label{eq:equivalent-expression}
w=\sum_{k\equiv 0,3 (\text{mod } 4)} [w]_k+\sum_{k\equiv 1,2 (\text{mod } 4)} [w]_k, \quad 0\leq k\leq n.
\end{equation}
Notice that if $k\equiv 0,2$ (mod 4), then $k$ is even and if $k\equiv 1,3$ $(\text{mod }4)$ then $k$ is odd. This representation \eqref{eq:equivalent-expression} is useful because the action of the conjugate operator is straightforward, indeed
\begin{align}\label{eq:conjugate}
\overline{w}=\sum_{k\equiv 0,3 (\text{mod } 4)} [w]_k\,-\sum_{k\equiv 1,2 (\text{mod } 4)} [w]_k, \quad 0\leq k\leq n.
\end{align}
The following result illustrates the equivalence between the solutions of the main Vekua equation \eqref{eq:main-Vekua} with the \textit{Clifford Beltrami equation} \eqref{eq:Clifford-Beltrami}.
\begin{proposition}\cite[Prop.\ 2]{Delgado2023}\label{prop:Beltrami equivalence}
Let $\alpha=\nabla f/f$, with $f$ be non-vanishing scalar function in $ W^{1,\infty}(\Omega)$. Then $w=\sum_{k\equiv 0,3 (\text{mod } 4)} [w]_k+\sum_{k\equiv 1,2 (\text{mod } 4)} [w]_k$ is solution of \eqref{eq:main-Vekua} in $\Omega$ if and only if
\begin{equation}\label{eq:transformation-solution}
  u=\dfrac{1}{f}\sum_{k\equiv 0,3 (\text{mod } 4)} [w]_k+f\, \sum_{k\equiv 1,2 (\text{mod } 4)} [w]_k,
\end{equation}
is solution of the Clifford Beltrami equation \eqref{eq:Clifford-Beltrami} in $\Omega$.
\end{proposition}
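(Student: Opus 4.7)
The plan is to prove the equivalence by a direct computation that links $Du$ and $D\overline{u}$ back to the Vekua residual $Dw-(\nabla f/f)\overline{w}$. The decisive tool is the paravector/non-paravector split of the conjugation given in \eqref{eq:conjugate}: writing $w=w_++w_-$ with $w_+=\sum_{k\equiv 0,3\,(\mathrm{mod}\,4)}[w]_k$ and $w_-=\sum_{k\equiv 1,2\,(\mathrm{mod}\,4)}[w]_k$, one has $\overline{w_+}=w_+$ and $\overline{w_-}=-w_-$, so $\overline{w}=w_+-w_-$.

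First I would apply $D$ to both $u=f^{-1}w_++fw_-$ and $\overline{u}=f^{-1}w_+-fw_-$. Because $f$ is scalar, the product rule gives $D(fv)=(\nabla f)v+f\,Dv$ and $D(f^{-1}v)=-f^{-2}(\nabla f)v+f^{-1}Dv$, with $\nabla f=\sum_i(\partial_if)e_i$ a genuine vector (no commutation issue arises since $f$ is scalar). This yields
\begin{align*}
Du&=-f^{-2}(\nabla f)\,w_++f^{-1}Dw_++(\nabla f)\,w_-+f\,Dw_-,\\
D\overline{u}&=-f^{-2}(\nabla f)\,w_++f^{-1}Dw_+-(\nabla f)\,w_--f\,Dw_-.
\end{align*}

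Next, to connect with the Beltrami coefficient $\mu=(1-f^2)/(1+f^2)$, I would form the combination that clears the denominator, namely $(1+f^2)Du-(1-f^2)D\overline{u}=(Du-D\overline{u})+f^2(Du+D\overline{u})$. A short calculation collapses the cross terms: the $f^{-2}(\nabla f)w_+$ contributions cancel against the $f^2\cdot f^{-2}$ pieces, and the $(\nabla f)w_-$ contributions combine, giving
\begin{align*}
(1+f^2)Du-(1-f^2)D\overline{u}&=2\bigl[(\nabla f)w_-+f\,Dw_--(\nabla f)w_++f^2\cdot f^{-1}Dw_+\bigr]\\
&=-2(\nabla f)(w_+-w_-)+2f\,D(w_++w_-)\\
&=2f\Bigl(Dw-\tfrac{\nabla f}{f}\,\overline{w}\Bigr).
\end{align*}
Since $f$ is scalar and bounded away from zero (so $2f\neq 0$ and $1+f^2\neq 0$ pointwise), this single identity proves both implications: the right-hand side vanishes iff $w$ solves the main Vekua equation \eqref{eq:main-Vekua}, and the left-hand side vanishes iff $Du=\mu\,D\overline{u}$, i.e.\ iff $u$ solves the Clifford Beltrami equation \eqref{eq:Clifford-Beltrami}.

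The main obstacle, in my view, is not analytic but bookkeeping: one must be careful that the conjugation formula \eqref{eq:conjugate} really produces the clean sign $\overline{w_+}=w_+$, $\overline{w_-}=-w_-$ so that $\overline{u}$ has the expected form, and one must resist the temptation to commute $\nabla f$ past Clifford-valued factors (it is unnecessary here because the ordering on both sides of the key identity is preserved; the cancellations occur termwise). With those pointwise identifications in place, the algebraic manipulation above is exact, and the proposition follows without any regularity subtlety beyond $f\in W^{1,\infty}(\Omega)$ non-vanishing, which is exactly the hypothesis given.
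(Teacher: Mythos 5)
Your proof is correct and takes essentially the same route as the paper: your single identity $(1+f^2)Du-(1-f^2)D\overline{u}=2f\bigl(Dw-\tfrac{\nabla f}{f}\overline{w}\bigr)$ is exactly $2f$ times the equivalence \eqref{eq:equivalence-Vekua} on which the paper's proof rests, since with $w_+,w_-$ denoting your two sums one has $Du+D\overline{u}=2D(f^{-1}w_+)$ and $Du-D\overline{u}=2D(f\,w_-)$. Both arguments come down to the conjugation split \eqref{eq:conjugate} together with the Leibniz rule for the scalar factor $f$, so there is nothing further to add.
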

The proof relies heavily on the next equivalence: $w=\sum_{k\equiv 0,3 (\text{mod } 4)} [w]_k+\sum_{k\equiv 1,2 (\text{mod } 4)} [w]_k$ is solution of \eqref{eq:main-Vekua} in $\Omega$ if and only if 
\begin{align}\label{eq:equivalence-Vekua}
 f\, D\left(\dfrac{1}{f} \sum_{k\equiv 0,3 (\text{mod } 4)} [w]_k \right)+\dfrac{1}{f} D\left(f\, \sum_{k\equiv 1,2 (\text{mod } 4)} [w]_k \right)=0,\qquad \text{in }\Omega.
\end{align}



Let $w$ be a solution of the main Vekua equation \eqref{eq:main-Vekua}. Observe that taking the vector part of the equivalent system \eqref{eq:equivalence-Vekua}, we obtain 
\[
 \Vec D\left(f [w]_2\right)=-f^2\nabla(w_0/f).
\]
Taking the divergence in both sides, we get that $w_0/f$ satisfies the \textit{conductivity equation}
\begin{equation}\label{eq:conductivity-Vekua}
    \div(f^2 \nabla (w_0/f))=0, \qquad \text{ in }\Omega.
\end{equation}
In particular, if the bivector part of $w$ vanishes, that is if $[w]_2= 0$, then $w_0/f$ is constant, and the conductivity equation trivially holds in $\Omega$. We have thus proven the first part of the following result:
\begin{proposition}(\textbf{Green-Vekua formula})\label{prop:Green-Vekua} \cite{Delgado-Moreira}
Let $\Omega\subset \mathbb{R}^n$ be a bounded domain with Lipschitz boundary and $f\in W^{1,\infty}(\Omega)$ away from zero. If $w$ is a solution of the main Vekua equation of \eqref{eq:main-Vekua}, then $\text{Sc w}/f=w_0/f$ satisfies the conductivity equation \eqref{eq:conductivity-Vekua} in $\Omega$. Moreover, if $w_0/f$ has boundary values and normal derivative well-defined, then the following Green-Vekua formula holds
\begin{equation}\label{eq:Green-Vekua-2}
\int_{\partial \Omega}  \left(- \vec \Phi(y-x)\cdot \eta(y)\, w_0(y)+\phi_0(y-x)\, f^2 \nabla\left(\dfrac{w_0}{f}\right)\Big|_{\partial\Omega}\cdot \eta\right) ds_{y}=\left\{
   \begin{array}{ll}
      w_{0}(x), \ & x\in \Omega,\\
      0,           & x\in \R^n\setminus \overline{\Omega},
   \end{array}
\right.
\end{equation}
where $\vec \Phi$ is a purely vector fundamental solution of $D-M^{\nabla f/f} C$ satisfying \eqref{constraint-1} such that $\vec \Phi(y-x)=f\nabla \phi_0(y-x)$.
\end{proposition}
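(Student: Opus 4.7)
The first claim---that $u:=w_0/f$ solves $\div(f^2\nabla u)=0$ in $\Omega$---is already established in the computation preceding the statement by taking the vector part of the equivalent system \eqref{eq:equivalence-Vekua} and then its divergence. I therefore focus on deriving the integral representation \eqref{eq:Green-Vekua-2}. My plan is to apply Green's second identity for the conductivity operator $L_f(\cdot):=\div(f^2\nabla\cdot)$ on the punctured domain $\Omega_\epsilon:=\Omega\setminus\overline{B(x,\epsilon)}$ with $u=w_0/f$ and the scalar potential $v(y):=\phi_0(y-x)$ encoded in the relation $\vec\Phi=f\nabla\phi_0$.

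The observation that makes this work is that $h_0:=f\phi_0$ is a scalar fundamental solution of the Schr\"{o}dinger operator $-\Delta+\Delta f/f$. This follows from Corollary \ref{cor:Cauchy-fundamental} combined with the factorization \eqref{eq:factorization-3}, once one notes that $(D-\alpha C)h_0=f\nabla(h_0/f)=f\nabla\phi_0=\vec\Phi$ for $\alpha=\nabla f/f$. The Liouville-type identity
\begin{equation*}
   \frac{1}{f}\,\div\bigl(f^2\,\nabla\phi_0\bigr) \;=\; \Delta(f\phi_0) \;-\; \frac{\Delta f}{f}\,(f\phi_0)
\end{equation*}
then shows that $L_f\phi_0(\cdot-x)$ is a Dirac mass concentrated at $x$ (up to the normalization fixed by constraint \eqref{constraint-1}), and in particular $L_f v=0$ on $\Omega_\epsilon$.

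Since $L_f u=0$ by the first part, Green's second identity on $\Omega_\epsilon$ yields
\begin{equation*}
   0=\int_{\partial\Omega}\bigl(vf^2\nabla u-uf^2\nabla v\bigr)\cdot\eta\,ds_y + \int_{\partial B(x,\epsilon)}\bigl(vf^2\nabla u-uf^2\nabla v\bigr)\cdot\eta\,ds_y.
\end{equation*}
Letting $\epsilon\to 0$, the piece $vf^2\nabla u\cdot\eta=f\phi_0\cdot f\nabla u\cdot\eta$ on the inner sphere vanishes because $\phi_0$ has integrable singularity while the surface area of $\partial B(x,\epsilon)$ is $O(\epsilon^{n-1})$; the other piece $uf^2\nabla v\cdot\eta=u\,f\,\vec\Phi\cdot\eta$ converges to $w_0(x)$ by \eqref{constraint-1} applied to the vector fundamental solution $\vec\Phi$. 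Substituting $\vec\Phi=f\nabla\phi_0$ in the boundary integral over $\partial\Omega$ produces exactly the right-hand side of \eqref{eq:Green-Vekua-2}. For $x\in\mathbb{R}^n\setminus\overline\Omega$ no excision is required, and Green's identity immediately gives the stated vanishing. The main obstacle I anticipate is tracking sign and normalization constants so that the weight of the Dirac mass produced by $L_f\phi_0$ matches precisely what \eqref{constraint-1} imposes on $\vec\Phi$; once the Liouville transformation is carried consistently through the singular limit, the identity reads off algebraically.
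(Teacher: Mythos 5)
Your argument is correct, and it takes the second of the two routes the paper itself points to. The paper's primary sketch runs through the Clifford machinery: one specializes the Cauchy-type formula \eqref{eq:scalar-Vekua} of Corollary \ref{cor:Cauchy} to a purely vector fundamental solution $\vec\Phi$ of $D-M^{\nabla f/f}C$ and then converts the bivector contribution of the Clifford product $\vec\Phi\,\eta\,w$ (recall $\Sc(\vec\Phi\,\eta\,w)=-\vec\Phi\cdot\eta\,w_0+\Sc([\vec\Phi\eta]_2[w]_2)$) into the conormal-derivative term $\phi_0\, f^2\nabla(w_0/f)\cdot\eta$. You instead set the Vekua equation aside entirely and run Green's second identity for $\div(f^2\nabla\cdot)$ on the punctured domain, using the Liouville identity $\div(f^2\nabla\phi_0)=f\,(\Delta-\Delta f/f)(f\phi_0)$ to see that $\phi_0(\cdot-x)$ is a fundamental solution of the conductivity operator with weight $-f\delta(\cdot-x)$ --- which is exactly the characterization the paper quotes from \cite{Delgado-Moreira} and explicitly offers as an alternative proof ("we can set aside Vekua equations\dots and prove it using basically the Divergence theorem"). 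What the Clifford route buys is that the formula drops out of the general Theorem \ref{th:Cauchy-theorem} framework valid for arbitrary $\alpha$; what your route buys is elementarity and a transparent bookkeeping of the Dirac-mass weight. Two small points: (1) for the inner-sphere limit of $\int u f^2\nabla\phi_0\cdot\eta$, the clean justification is the one you already have in hand --- $\div(f^2\nabla\phi_0)=-f\delta$ plus continuity of $u$ at $x$ --- rather than constraint \eqref{constraint-1}, which is a statement about the full Clifford product $\Phi\eta v$ and would reintroduce the bivector term you are trying to avoid; (2) the vanishing of $\int_{\partial B(x,\epsilon)}\phi_0 f^2\nabla u\cdot\eta$ tacitly uses local boundedness of $\nabla u$ near $x$ (or an averaging over radii), a regularity point the paper also glosses over under its standing assumption that boundary values and normal derivatives are well defined.
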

The proof of Proposition \ref{prop:Green-Vekua} relies on the Cauchy type integral formula \ref{eq:scalar-Vekua} and exploiting some properties of suitable fundamental solutions of $D-M^{\nabla f/f} C$. By direct computation, it was also proved in \cite{Delgado-Moreira} that if $\vec \Phi$ is a purely vector fundamental solution of $D-M^{\nabla f/f} C$, then it is equivalent to finding $\phi$ such that $\vec \Phi/f=\nabla \phi_0$ and the scalar function satisfies
\begin{align*}\label{eq:fundamental-conductivity}
\div_y\left(f^2(y) \nabla \phi_0(y-x)\right)=-f\delta(y-x), \qquad  y\in \Omega, \, x\in \mathbb{R}^n.
\end{align*}
We can construct the non-scalar part of solutions of the main Vekua equation given its scalar part as a solution of the conductivity equation \eqref{eq:conductivity-Vekua}, see \cite{DelPor2018} for the three-dimensional case and \cite{Delgado-Moreira} for the $n$-dimensional case.
By the above observations, we can set aside Vekua equations, rewrite the formula \eqref{eq:Green-Vekua-2} just in terms of $\phi$, and prove it using basically the Divergence theorem and the fact that the scalar function $\phi$ is a fundamental solution of the conductivity equation up to the multiplicative factor $f$.

In general, the construction of fundamental solutions of perturbed Dirac operators such as $D-(\nabla f/f)C$ and $D-M^{\nabla f/f}C$ has not been explored, even for the case when $\nabla f/f$ is a constant. Nevertheless, the standard Cauchy kernel defined in \eqref{eq:Cauchy-kernel} to be a purely vectorial fundamental solution of $D$ satisfies that $E/f$ is a fundamental solution of the Vekua operator $D-(\nabla f/f)C$ up to a multiplicative scalar function. Indeed,
\begin{equation}\label{eq:fundamental-Cauchy}
  \left(D-\dfrac{\nabla f}{f} C\right)\left(\dfrac{E}{f}\right)=\dfrac{DE}{f}-\dfrac{ \nabla f}{f^2}E+\dfrac{\nabla f}{f^2}E=\dfrac{\delta}{f}.
\end{equation}

\begin{proposition}\label{prop:integral-Cauchy}
Let $\Omega\subset \mathbb{R}^n$ be a bounded domain with Lipschitz boundary and $f\in W^{1,\infty}(\Omega)$ away from zero. If $w$ is a solution of the main Vekua equation \eqref{eq:main-Vekua} and $u_0=w_0/f=\text{Sc }(w)/f$ has boundary values and normal derivative well-defined, then the following formula holds in $\Omega$
\begin{align}\label{eq:green-formula-type-2}
&\int_{\partial \Omega} \left(-E(y-x)\cdot \eta(y)\, u_0|_{\partial\Omega}\,+\dfrac{1}{\sigma_n(n-2)}\frac{1}{|x-y|^{n-2}}\, \nabla u_0 |_{\partial\Omega}\cdot \eta\right) ds_{y}\nonumber\\
&+\int_{\Omega} \dfrac{2}{\sigma_n(n-2)}\frac{1}{|x-y|^{n-2}}\frac{\nabla f}{f}\cdot \nabla u_0 \, dy=\left\{
   \begin{array}{ll}
      u_0(x), \ & x\in \Omega,\\
      0,           & x\in \R^n\setminus \overline{\Omega}.
   \end{array}
\right.
\end{align}
\end{proposition}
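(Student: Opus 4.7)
The plan is to recognize that \eqref{eq:green-formula-type-2} is, at heart, Green's second identity applied to $u_0=w_0/f$ and the standard Newtonian fundamental solution, with the extra volume term absorbing the fact that $u_0$ satisfies a conductivity equation rather than Laplace's equation. Throughout I write $h_0(z):=\frac{1}{\sigma_n(n-2)|z|^{n-2}}$ for the positive fundamental solution of $-\Delta$ on $\R^n$, $n\geq 3$, and observe by direct differentiation that $\nabla_y h_0(y-x)=E(y-x)$, with $E$ the Cauchy kernel \eqref{eq:Cauchy-kernel}. Consequently the boundary integrand in \eqref{eq:green-formula-type-2} is exactly $(h_0\,\nabla u_0-u_0\,\nabla h_0)\cdot\eta$, which is the Green identity integrand.

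The first step is to turn the Vekua hypothesis into a second-order equation for $u_0$. By Proposition \ref{prop:Green-Vekua}, $u_0=w_0/f$ satisfies $\div(f^2\nabla u_0)=0$ in $\Omega$. Expanding the divergence and dividing by $f^2$ (which is bounded away from zero) yields the pointwise identity
\[
\Delta u_0=-\frac{2}{f}\,\nabla f\cdot\nabla u_0 \qquad \text{in }\Omega,
\]
which is precisely the correction captured by the volume integral in \eqref{eq:green-formula-type-2}.

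The second step, for $x\in\Omega$, is to apply Green's second identity on the punctured domain $\Omega_\epsilon:=\Omega\setminus\overline{B(x;\epsilon)}$ to the pair $(u_0,\,h_0(\cdot-x))$, using that $h_0(\cdot-x)$ is harmonic on $\Omega_\epsilon$. Passing to the limit $\epsilon\to 0$ is the classical Newtonian-potential argument: the inner-sphere contribution of $h_0\,\nabla u_0\cdot\eta$ is $O(\epsilon)\to 0$, while the inner-sphere contribution of $u_0\,\nabla h_0\cdot\eta$ reproduces $u_0(x)$ via $\nabla h_0\cdot\eta|_{\partial B(x;\epsilon)}=1/(\sigma_n\epsilon^{n-1})$. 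Substituting the identity for $\Delta u_0$ and the equality $\nabla h_0=E$ into the resulting relation delivers \eqref{eq:green-formula-type-2} for $x\in\Omega$. For $x\in\R^n\setminus\overline{\Omega}$, the kernel $h_0(\cdot-x)$ is smooth and harmonic on $\Omega$, so Green's identity applies directly to $\Omega$ without excising a ball, and the same substitution yields the zero right-hand side.

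The main obstacle is essentially book-keeping rather than analysis: tracking the sign conventions for the outward normal on $\partial\Omega_\epsilon$ versus $\partial B(x;\epsilon)$, and the relation between $E$ and $\nabla h_0$, and justifying the $\epsilon\to 0$ passage in the volume integral. The latter is legitimate because $f\in W^{1,\infty}(\Omega)$ bounded away from zero together with the hypothesis that $\nabla u_0$ is well-defined on $\partial\Omega$ forces $\Delta u_0\in L^\infty(\Omega)$ via the conductivity equation, while $h_0\in L^1_{\mathrm{loc}}$, so dominated convergence applies. Notice that this proof uses no Vekua machinery beyond the identification $u_0=w_0/f$ supplied by Proposition \ref{prop:Green-Vekua}; in this sense the formula may be viewed as a Vekua-free counterpart of the Green--Vekua formula \eqref{eq:Green-Vekua-2}, in which the specific fundamental solution $\vec\Phi$ of $D-M^{\nabla f/f}C$ is replaced by the standard Newtonian kernel at the cost of an extra volume term.
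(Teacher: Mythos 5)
Your proposal is correct and follows essentially the same route as the paper: both rest on the divergence theorem (Green's second identity) applied to $u_0$ and the Newtonian kernel $h_0(y-x)=\frac{1}{\sigma_n(n-2)|x-y|^{n-2}}$, the observation $\nabla_y h_0(y-x)=E(y-x)$, and the conductivity equation $\div(f^2\nabla u_0)=0$ rewritten as $\Delta u_0=-\tfrac{2}{f}\nabla f\cdot\nabla u_0$ to produce the volume term. The only difference is presentational: you excise a ball and pass to the limit $\epsilon\to0$, whereas the paper invokes the distributional identity $-\div_y E(y-x)=\delta(y-x)$ directly.
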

\begin{proof}
The proof is a straightforward application of the Divergence theorem by noticing that $\dfrac{1}{\sigma_n(n-2)}\nabla_y\left(\frac{1}{|x-y|^{n-2}}\right)=E(y-x)$. Indeed,
\begin{align*}
&\text{ }\int_{\partial \Omega} \left(-E(y-x)\cdot \eta(y)\, u_0|_{\partial\Omega}\,+\dfrac{1}{\sigma_n(n-2)}\frac{1}{|x-y|^{n-2}}\, \nabla u_0 |_{\partial\Omega}\cdot \eta\right) ds_{y}\\
&=\int_{\Omega}\div\left(-E(y-x) u_0(y)+\dfrac{1}{\sigma_n(n-2)}\frac{1}{|x-y|^{n-2}}\, \nabla u_0\right)\, dy\\
&=\int_{\Omega} -\div E(y-x) u_0(y)\, dy+\dfrac{1}{\sigma_n(n-2)}\frac{1}{|x-y|^{n-2}}\, \Delta u_0 \, dy.
\end{align*}
The desired integral formula \eqref{eq:green-formula-type-2} comes from the facts that $\Sc D_yE(y-x)=-\div_y E(y-x)=\delta(y-x)$ and $\div(f^2\nabla u_0)=0$ in $\Omega$. 
\end{proof}
By the equivalence between the solutions of the main Vekua equation and the Clifford Beltrami equation provided in Proposition \ref{prop:Beltrami equivalence}, we have that the previous result can be enunciated in terms of the solutions of \eqref{eq:Clifford-Beltrami}. 
\section{Uniqueness result}\label{sec:uniqueness}
In this section, we revisit the reduction to the Schr\"{o}dinger equation. More precisely, we revisit that the uniqueness in the inverse conductivity problem can be deduced from the corresponding uniqueness result from the Schr\"{o}dinger equation, shown by Sylvester and Uhlmann in \cite{Sylvester-Uhlmann}, and carefully explained in \cite[Sec.\ 3.1]{Salo-2008}. Besides, we give a formula relating the difference of potentials to the difference of solutions to the associated Schr\"{o}dinger equations, see \eqref{eq:identity-potential}; in this formula, the interrelation between the fundamental solutions of the Vekua operators and the Schr\"{o}dinger operator plays a fundamental role. Later, applying the construction of complex geometrical optics solutions provided in \cite{Paivarinta2003}, we give an alternative proof of the uniqueness inverse problem for Lipschitz conductivities.

Since we work with Clifford-valued functions $w$ in our manuscript, we prefer to continue using the notation $w_0$ for scalar-valued functions to avoid confusion, except for instance for the conductivities $f, g$ where the context is clear.

If $q\in L^{\infty}(\Omega)$, we consider the Dirichlet problem for the Schr\"{o}dinger equation
\begin{equation}\label{eq:Schrodinger}
\left\{
	\begin{array}{rclll}
(-\Delta +q)w_0&=& 0 \quad \mbox{in}\quad \Omega,\\
w_0&=&\varphi_0\quad \mbox{on}\quad \partial\Omega,
\end{array}
\right. 
\end{equation}
where $\varphi_0\in H^{1/2}(\partial\Omega)$.
The problem \eqref{eq:Schrodinger} is well-posed if the following three conditions hold
\begin{enumerate}
    \item There exists a weak solution $w_0$ in $H^1(\Omega)$ for any boundary value $\varphi_0$ in $H^{1/2}(\partial\Omega)$.
    \item The solution $w_0$ is unique.
    \item The resolvent operator $\varphi_0\mapsto w_0$ is continuous from $H^{1/2}(\partial\Omega)$ to $H^1(\Omega)$.
\end{enumerate}
If the problem \eqref{eq:Schrodinger} is well-posed, we can define a Dirichlet-to-Neumann map as follows
\begin{align}\label{eq:D-N-Schrodinger}
\nonumber    \Lambda_{q}\colon H^{1/2}(\partial\Omega) &\to H^{-1/2}(\partial\Omega)\\
    \varphi_0 &\mapsto \nabla w_0|_{\partial\Omega}\cdot \eta.
\end{align}
The weak formulation of $\Lambda_q$ is
\begin{equation}\label{weak-D-N-Schrodinger}
(\Lambda_q \varphi_0,\psi_0)_{\partial\Omega}=\int_{\Omega} (\nabla w_0\cdot \nabla v_0+qw_0v_0)\, dy, \quad \varphi_0, \psi_0\in H^{1/2}(\partial\Omega),
\end{equation}
where $w_0$ solves \eqref{eq:Schrodinger} and $v_0$ is any function in $H^1(\Omega)$ such that $v_0|_{\partial\Omega}=\psi_0$. The problem \eqref{eq:Schrodinger} is not always well-posed. However, for potentials coming from conductivities, the Dirichlet problem is always well-posed.

Let us consider the Dirichlet problem for the conductivity equation
\begin{equation}\label{eq:conductivity-2}
	\left\{
\begin{array}{rclll}
\div\left(f^2\nabla(w_0/f)\right)&=& 0 \quad \mbox{in}\quad \Omega,\\
w_0&=&\varphi_0\quad \mbox {on}\quad \partial\Omega,
\end{array}
\right. 
\end{equation}
where $\varphi_0\in H^{1/2}(\partial\Omega)$ and $w_0\in H^1(\Omega)$ is the unique solution of \eqref{eq:conductivity-2}. We define the Dirichlet-to-Neumann map associated to \eqref{eq:conductivity-2} as follows
\begin{align}\label{eq:D-N-conductivity}
\nonumber    \Lambda_{f}\colon H^{1/2}(\partial\Omega) &\to H^{-1/2}(\partial\Omega)\\
    \varphi_0/f &\mapsto f^2\nabla (w_0/f)|_{\partial\Omega}\cdot \eta.
\end{align}
 Observe that the Dirichlet-to-Neumann map \eqref{eq:D-N-conductivity} is weakly defined by
\begin{align}\label{eq:weakly-D-N}
(\Lambda_f(\varphi_0/f),\psi_0)_{\partial\Omega}=\int_{\Omega} f^2 \nabla(w_0/f)\cdot \nabla v_0,
\end{align}
where $w_0$ solves the Dirichlet problem for the conductivity equation \eqref{eq:conductivity-2} and $v_0$ is any extension in $H^1(\Omega)$ such that $v_0|_{\partial\Omega}=\psi_0$.
The pairing in the boundary is defined as follows
\[
(\varphi,\psi)_{\partial\Omega}=\int_{\partial\Omega} \varphi(y) \psi(y)\, ds_y.
\]
In 1987, Sylvester and Uhlmann introduced the well-known \textit{complex geometrical optics solutions} (CGO) and proved the following uniqueness result for the Schr\"{o}dinger equation.
\begin{lemma}\label{lemma:uniqueness-potentials}
\cite{Sylvester-Uhlmann} Let $f, g$ be conductivities such that the corresponding potentials $q_f=\Delta f/f, q_g=\Delta g/g$ belong to $L^{\infty}(\Omega)$. If $\Lambda_{q_f}=\Lambda_{q_g}$, then $q_f=q_g$ in $\Omega$.
\end{lemma}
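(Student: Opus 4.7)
The plan is to use the classical Sylvester--Uhlmann strategy based on complex geometrical optics (CGO) solutions. The proof has three main ingredients: a bilinear integral identity coming from the equality of the Dirichlet-to-Neumann maps, the construction of CGO solutions with controlled remainders, and a Fourier-analytic extraction of pointwise information about $q_f - q_g$.

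First, starting from $\Lambda_{q_f}=\Lambda_{q_g}$ and using the symmetric weak formulation \eqref{weak-D-N-Schrodinger}, I would show the integral identity
\begin{equation*}
\int_{\Omega}(q_f-q_g)\, u_f\, u_g\, dy = 0
\end{equation*}
for every pair of $H^1(\Omega)$-solutions $u_f$ of $(-\Delta+q_f)u_f=0$ and $u_g$ of $(-\Delta+q_g)u_g=0$. This follows by writing $(\Lambda_{q_f}\varphi_f,\psi)_{\partial\Omega} = (\Lambda_{q_g}\varphi_f,\psi)_{\partial\Omega}$, choosing $\varphi_f=u_f|_{\partial\Omega}$ and $\psi=u_g|_{\partial\Omega}$, and subtracting the two bilinear expressions.

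Second, I would construct CGO solutions of the form $u_\zeta(x)=e^{\zeta\cdot x}(1+\psi_\zeta(x))$, where $\zeta\in\mathbb{C}^n$ satisfies $\zeta\cdot\zeta=0$ and the remainder $\psi_\zeta$ solves $(-\Delta-2\zeta\cdot\nabla+q)\psi_\zeta=-q$. For $L^\infty$ potentials this is done by inverting the conjugated Laplacian via Faddeev's Green function $G_\zeta$, obtaining the a priori estimate $\|G_\zeta\|_{L^2\to L^2}\lesssim |\zeta|^{-1}$ from an explicit Fourier-multiplier analysis, and solving for $\psi_\zeta$ by a Neumann series when $|\zeta|$ is large. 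This yields $\|\psi_\zeta\|_{L^2(\Omega)}\to 0$ as $|\zeta|\to\infty$.

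Third, fix $\xi\in\mathbb{R}^n$ and (using $n\geq 3$) choose $\zeta_1(t),\zeta_2(t)\in\mathbb{C}^n$ with $\zeta_j\cdot\zeta_j=0$, $\zeta_1+\zeta_2=-i\xi$, and $|\zeta_j(t)|\to\infty$ as $t\to\infty$. Substituting the corresponding CGO solutions $u_f=e^{\zeta_1\cdot x}(1+\psi_{1})$ and $u_g=e^{\zeta_2\cdot x}(1+\psi_{2})$ into the integral identity and letting $t\to\infty$ gives
\begin{equation*}
\widehat{(q_f-q_g)}(\xi)=\int_{\Omega}(q_f-q_g)(x)\, e^{-i\xi\cdot x}\, dx = 0
\end{equation*}
for every $\xi\in\mathbb{R}^n$ (where $q_f-q_g$ is extended by zero outside $\Omega$), and Fourier inversion yields $q_f=q_g$ in $\Omega$.

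The main obstacle is the resolvent estimate $\|G_\zeta\|_{L^2\to L^2}\lesssim|\zeta|^{-1}$, which is the heart of the Sylvester--Uhlmann argument; once it is available, the existence of $\psi_\zeta$ with the required decay follows from a contraction argument, and the rest of the proof is the density argument supplied by the Fourier transform. The dimensional restriction $n\geq 3$ is used essentially here, since we need two orthogonal directions transverse to $\xi$ to build $\zeta_1,\zeta_2$ with the required properties.
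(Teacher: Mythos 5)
Your proposal is correct and is precisely the classical Sylvester--Uhlmann argument: the paper does not prove this lemma itself but cites it from \cite{Sylvester-Uhlmann}, and it explicitly identifies the bilinear identity $\int_{\Omega}(q_f-q_g)\,u_f\,u_g\,dy=\left((\Lambda_{q_f}-\Lambda_{q_g})\varphi_{0,f},\varphi_{0,g}\right)_{\partial\Omega}$ from your first step as the cornerstone of that proof. The only cosmetic remark is that the resolvent bound for the Faddeev Green's function is naturally stated in the weighted spaces $L^2_{\delta+1}\to L^2_{\delta}$, $-1<\delta<0$, which restricts to the plain $L^2$ bound you use on the bounded domain $\Omega$.
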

The following boundary determination result will be used throughout the manuscript; it was proved for smooth conductivities in \cite{Vogelius} and for $C^1$ conductivities in Lipschitz domains by \cite{Alessandrini}. 
\begin{lemma}\label{lemma:boundary}\cite{Alessandrini, Vogelius, Sylvester-Uhlmann-1988}
    Let $f, g\in W^{1,\infty}(\Omega)$ conductivities. If $\Lambda_f=\Lambda_g$, then $f|_{\partial\Omega}=g|_{\partial\Omega}$ and $\nabla f|_{\partial\Omega}\cdot \eta=\nabla g|_{\partial\Omega}\cdot \eta$.
\end{lemma}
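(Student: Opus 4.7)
The plan is the classical reduction from the conductivity Calderón problem to an interior uniqueness statement for the Schrödinger potential $q=\Delta f/f$, executed through the Vekua-theoretic integral representations of Section~\ref{sec:integral}. First, I would invoke Lemma~\ref{lemma:boundary} to secure $f|_{\partial\Omega}=g|_{\partial\Omega}$ and $\nabla f\cdot\eta|_{\partial\Omega}=\nabla g\cdot\eta|_{\partial\Omega}$. Combined with the Liouville substitution $w_0=fu_0$ and the weak formulation~\eqref{eq:weakly-D-N}, these boundary identities convert the hypothesis $\Lambda_f=\Lambda_g$ into the equality $\Lambda_{q_f}=\Lambda_{q_g}$ of the associated Schrödinger Dirichlet-to-Neumann maps, which is essentially the content of Proposition~\ref{prop:extensions-equal}.

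Next, I would feed this equality into Proposition~\ref{prop:difference-potentials}, whose Vekua-based identity
\[
\int_{\Omega}\left(q_g-q_f\right)w_{0,f}(y)\,\theta_g(y-x)\,dy=0\qquad (x\in\R^n\setminus\overline{\Omega})
\]
plays the role of the usual Alessandrini/Sylvester--Uhlmann orthogonality. Polarizing against a second solution $w_{0,g}$ of $(-\Delta+q_g)w=0$ yields the symmetric identity $\int_{\Omega}(q_f-q_g)\,w_{0,f}\,w_{0,g}\,dy=0$ for every admissible pair of Schrödinger solutions.

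Third, I would insert complex geometrical optics (CGO) solutions $w_{0,f}=e^{\zeta_f\cdot y}(1+r_f)$ and $w_{0,g}=e^{\zeta_g\cdot y}(1+r_g)$ with $\zeta_f,\zeta_g\in\C^n$, $\zeta_f\cdot\zeta_f=\zeta_g\cdot\zeta_g=0$, $\zeta_f+\zeta_g=-i\xi$, and $\|r_f\|,\|r_g\|\to 0$ as $|\zeta_f|,|\zeta_g|\to\infty$, following the Päivärinta--Panchenko--Uhlmann scheme~\cite{Paivarinta2003}. Since the Lipschitz regularity of $f,g$ only gives $q_f,q_g$ as distributions, the PPU construction does not apply verbatim; the route I envisage is to build the CGO solutions first at the Vekua level by inverting the perturbed Dirac operators $D-(\nabla f/f)C$ via the Teodorescu transform and the intertwiner $S_{\alpha}$ of~\eqref{eq:invariant-operator}, and then to pass to the scalar part through Corollary~\ref{cor:main-Cauchy} and the factorization~\eqref{eq:factorization-3}. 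Taking $|\zeta_f|\to\infty$ in the polarized orthogonality will then yield $\widehat{q_f-q_g}(\xi)=0$ for every $\xi$, hence $q_f=q_g$ as distributions.

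Finally, from $q_f=q_g$ I would conclude $f=g$ by observing that $u\equiv 1$ trivially solves the conductivity equation, so both $f$ and $g$ are positive solutions of the same Schrödinger equation $(-\Delta+q_f)w_0=0$ with identical Dirichlet data on $\partial\Omega$; well-posedness of this Dirichlet problem (which holds because it is equivalent to the conductivity Dirichlet problem via the Liouville substitution) forces $f=g$ in $\Omega$. The analytic heart of the argument is the third step: constructing CGO solutions with the required remainder decay when the potentials are merely distributional is the obstacle that the Vekua machinery of Section~\ref{sec:integral}---in particular the integral operators $T_\Omega$ and $S_\alpha$ together with the Hodge decomposition~\eqref{eq:Hodge-2}---is designed to overcome, substituting for the resolvent estimates used by Haberman--Tataru and Caro--Rogers.
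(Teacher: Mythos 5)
Your proposal does not prove the assigned statement: what you have sketched is the global uniqueness result of Theorem~\ref{th:uniqueness} (interior identification $f=g$ in $\Omega$), whereas the statement here is the \emph{boundary determination} lemma, namely that $\Lambda_f=\Lambda_g$ forces $f|_{\partial\Omega}=g|_{\partial\Omega}$ and $\nabla f|_{\partial\Omega}\cdot\eta=\nabla g|_{\partial\Omega}\cdot\eta$. Worse, your very first step \emph{invokes} Lemma~\ref{lemma:boundary} in order to pass from $\Lambda_f=\Lambda_g$ to $\Lambda_{q_f}=\Lambda_{q_g}$, so as a proof of that lemma the argument is circular: the two boundary identities are consumed as hypotheses and never derived. (Two smaller misattributions: the reduction $\Lambda_f=\Lambda_g\Rightarrow\Lambda_{q_f}=\Lambda_{q_g}$ is the content of Lemma~\ref{lemma-D-N} combined with the boundary lemma, not of Proposition~\ref{prop:extensions-equal}; and Proposition~\ref{prop:difference-potentials} itself already uses Lemma~\ref{lemma:boundary} in its proof, so routing through it compounds the circularity.)

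For the record, the paper offers no proof of this lemma either --- it is quoted from \cite{Alessandrini, Vogelius, Sylvester-Uhlmann-1988} --- so any genuine proof must come from the boundary-localization techniques of those references, which are absent from your outline. The mechanism is local, not global: in the Sylvester--Uhlmann approach one tests the quadratic form $\bigl((\Lambda_f-\Lambda_g)\varphi_N,\varphi_N\bigr)_{\partial\Omega}$, via the weak formulation \eqref{eq:weakly-D-N}, against highly oscillatory Dirichlet data $\varphi_N$ concentrated near a fixed boundary point $x_0\in\partial\Omega$ (frequency $N\to\infty$, amplitude cut off in a shrinking neighborhood of $x_0$); the leading term of the asymptotic expansion in $N$ recovers $f(x_0)$, giving $f=g$ on $\partial\Omega$, and the next-order term recovers $\partial_\eta f(x_0)$. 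Alessandrini's alternative uses singular solutions with poles approaching $x_0$ from $\R^n\setminus\overline{\Omega}$. By contrast, the CGO solutions of \cite{Paivarinta2003} that you propose to build with the Teodorescu transform and $S_\alpha$ oscillate globally as $|\zeta|\to\infty$ and cannot be localized at a boundary point, so they can only yield interior information (the Fourier identity $\widehat{q_f-q_g}(\xi)=0$) \emph{after} the boundary lemma is already available --- they cannot replace it.
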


\begin{lemma}\label{lemma-D-N}
Let $f\in W^{2,\infty}(\Omega)$ be conductivity away from zero and $q_f=\Delta f/f$, then the Dirichlet problem \eqref{eq:Schrodinger} is well-posed and the factorization is fulfilled
\[
-\operatorname{div} \left(f^2\nabla (w_0/f)\right)=f\left(-\Delta+q_f\right)w_0=0.
\]
Moreover, the following relation between the Dirichlet-to-Neumann maps $\Lambda_f$ and $\Lambda_{q_f}$ holds
\begin{equation}\label{eq:D-N-relation}
    \Lambda_{f}(\varphi_0/f)=f \Lambda_{q_f}(\varphi_0)-\nabla f|_{\partial\Omega}\cdot \eta \varphi_0.
\end{equation}
\end{lemma}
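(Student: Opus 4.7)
The plan is to handle the three assertions in sequence, with a single product-rule identity doing most of the work. First I would establish the factorization by a direct distributional computation. From the quotient rule, $f^{2}\nabla(w_0/f)=f\nabla w_0-w_0\nabla f$; taking divergence and noting that the cross terms $\nabla f\cdot\nabla w_0$ cancel gives
\[
\operatorname{div}\!\bigl(f^{2}\nabla(w_0/f)\bigr)=f\Delta w_0-w_0\Delta f,
\]
which rearranges to $-\operatorname{div}(f^{2}\nabla(w_0/f))=f(-\Delta+q_f)w_0$. The hypothesis $f\in W^{2,\infty}(\Omega)$ with $f$ bounded away from zero guarantees $q_f=\Delta f/f\in L^{\infty}(\Omega)$, so the identity is meaningful as a distributional equality for any $w_0\in H^{1}(\Omega)$.

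For the well-posedness of the Schr\"odinger Dirichlet problem \eqref{eq:Schrodinger}, I would exploit the fact that multiplication by $f$ (resp.\ by $1/f$) is a bicontinuous isomorphism of $H^{1}(\Omega)$ and of $H^{1/2}(\partial\Omega)$, since $f,1/f\in W^{1,\infty}(\Omega)$. The factorization just established shows that $w_0\in H^{1}(\Omega)$ solves \eqref{eq:Schrodinger} with boundary datum $\varphi_0$ if and only if $u_0:=w_0/f$ solves the conductivity equation \eqref{eq:conductivity-2} with boundary datum $\varphi_0/f$. Since the latter Dirichlet problem is well-posed (as recalled in the introduction), existence, uniqueness, and continuous dependence on the data all transfer to \eqref{eq:Schrodinger}.

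Finally, for the relation \eqref{eq:D-N-relation}, I would again invoke the identity $f^{2}\nabla(w_0/f)=f\nabla w_0-w_0\nabla f$, take its trace on $\partial\Omega$, and contract with the outward unit normal $\eta$. Using $w_0|_{\partial\Omega}=\varphi_0$ together with the definitions \eqref{eq:D-N-Schrodinger} and \eqref{eq:D-N-conductivity} yields the claimed formula directly. To make this rigorous when $w_0$ is only in $H^{1}(\Omega)$, I would pair against an arbitrary $\psi_0\in H^{1/2}(\partial\Omega)$ and reduce both sides of \eqref{eq:D-N-relation} to bulk integrals through the weak formulations \eqref{weak-D-N-Schrodinger} and \eqref{eq:weakly-D-N}; the factorization identity then closes the computation.

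The main obstacle is not conceptual but one of bookkeeping on the boundary: one must check that every product appearing in \eqref{eq:D-N-relation} lives in a compatible function space. Here $f|_{\partial\Omega}$ is Lipschitz and, thanks to $f\in W^{2,\infty}(\Omega)$, the normal trace $(\nabla f\cdot\eta)|_{\partial\Omega}$ is in $L^{\infty}(\partial\Omega)$. Consequently, multiplication by $f|_{\partial\Omega}$ preserves the duality between $H^{1/2}(\partial\Omega)$ and $H^{-1/2}(\partial\Omega)$, and the remaining product $(\nabla f\cdot\eta)\,\varphi_0$ lies at least in $L^{2}(\partial\Omega)\subset H^{-1/2}(\partial\Omega)$; the identity therefore makes sense as an equality in $H^{-1/2}(\partial\Omega)$.
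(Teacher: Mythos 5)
Your proposal is correct. The paper itself states Lemma \ref{lemma-D-N} without proof (it is the standard reduction from the conductivity equation to the Schr\"odinger equation, deferred to \cite{Sylvester-Uhlmann} and \cite[Sec.\ 3.1]{Salo-2008}), and your argument is exactly that standard one: the product-rule identity $f^{2}\nabla(w_0/f)=f\nabla w_0-w_0\nabla f$ yields the factorization, transfers well-posedness from the conductivity problem via the isomorphisms $w_0\mapsto w_0/f$, and, after taking normal traces (or, rigorously, pairing with $\psi_0\in H^{1/2}(\partial\Omega)$ in the weak formulations \eqref{weak-D-N-Schrodinger} and \eqref{eq:weakly-D-N}), gives \eqref{eq:D-N-relation}. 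Your attention to the regularity needed for $q_f\in L^{\infty}(\Omega)$ and for the boundary products to make sense in $H^{-1/2}(\partial\Omega)$ is exactly the right bookkeeping.
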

Now, we recall the proof of the uniqueness result provided by Sylvestar and Uhlmann: If $\Lambda_f=\Lambda_g$, then Lemmas \ref{lemma:boundary} and \ref{lemma-D-N} imply that $\Lambda_{q_f}=\Lambda_{q_g}$. By Lemma \ref{lemma:uniqueness-potentials}, we have that $\Delta f/f=\Delta g/g$ in $\Omega$. Since the Dirichlet problem for the Schr\"{o}dinger equation \eqref{eq:Schrodinger} is well-posed for potentials coming from conductivities and $f|_{\partial\Omega}=g|_{\partial\Omega}$, then $f\equiv g$ in $\Omega$.

Now, we revisit this result using the Green-Vekua integral formula developed in Section \ref{sec:integral}. Observe that for conductivities in $W^{2, \infty}(\Omega)$, we do not have enough regularity to write normal derivatives as we did in Proposition \ref{prop:Green-Vekua}. Thus, using the weak formulation of the Dirichlet-to-Neumann map $\Lambda_{f}$ \eqref{eq:weakly-D-N}, we have that the integral formula \eqref{eq:Green-Vekua-2} can be rewritten as follows
\begin{equation}\label{eq:Green-Vekua-3}
\int_{\partial \Omega} - \vec \Phi(y-x)\cdot \eta(y)\, w_0(y)\, ds_y+ \left(\Lambda_{f}(\varphi_0/f), \phi_0(\cdot-x)\right)_{\partial\Omega}=\left\{
   \begin{array}{ll}
      w_{0}(x), \ & x\in \Omega,\\
      0,           & x\in \R^n\setminus \overline{\Omega},
   \end{array}
\right.
\end{equation}
where $\vec \Phi$ is a purely vector fundamental solution of $D-M^{\nabla f/f} C$ satisfying \eqref{constraint-1} such that $\vec \Phi(y-x)=f\nabla \phi_0(y-x)$.
\begin{proposition}\label{prop:difference-potentials}
Let $\Omega\subset \mathbb{R}^n$ be a bounded domain with Lipschitz boundary. Let $f, g\in W^{2,\infty}(\Omega)$ conductivities away from zero. If $\Lambda_f=\Lambda_g$, then 
\begin{align}\label{eq:identity-potential}
\int_{\Omega} \left(\frac{\Delta g}{g}-\frac{\Delta f}{f}\right) w_{0,f}(y)\theta_g(y-x)\, dy&=\int_{\Omega} \left(\frac{\Delta g}{g}-\frac{\Delta f}{f}\right) w_{0,g}(y)\theta_f(y-x)\, dy \nonumber\\
&=\left\{
   \begin{array}{ll}
      w_{0,f}(x)-w_{0,g}(x), \ & x\in \Omega,\\
      0,           & x\in \R^n\setminus \overline{\Omega},
   \end{array}
\right.
\end{align}
where $w_{0,f}, w_{0,g}$ are solutions and $\theta_f, \theta_g$ are fundamental solutions of the Schr\"{o}dinger equation with potential $q_f=\Delta f/f$ and $q_g=\Delta g/g$, respectively. Moreover, the following statements are equivalent
\begin{enumerate}
    \item[(i)]  $w_{0,f}=w_{0,g}$ a.e. in $\Omega$.
     \item[(ii)] $\frac{\Delta f}{f}=\frac{\Delta g}{g}$ a.e. in $\Omega$.
\end{enumerate} 
\end{proposition}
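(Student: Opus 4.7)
The plan is to apply Green's second identity for the Schr\"odinger operator, after transferring the boundary hypothesis $\Lambda_f=\Lambda_g$ to the associated Schr\"odinger Dirichlet-to-Neumann maps. By Lemma~\ref{lemma:boundary} together with the relation \eqref{eq:D-N-relation} from Lemma~\ref{lemma-D-N}, the assumption $\Lambda_f=\Lambda_g$ gives $f|_{\partial\Omega}=g|_{\partial\Omega}$, $\nabla f\cdot\eta|_{\partial\Omega}=\nabla g\cdot\eta|_{\partial\Omega}$, and $\Lambda_{q_f}=\Lambda_{q_g}$ as operators on $H^{1/2}(\partial\Omega)$. Fix a common boundary datum $\varphi_0\in H^{1/2}(\partial\Omega)$ and let $w_{0,f},w_{0,g}\in H^1(\Omega)$ be the unique solutions of \eqref{eq:Schrodinger} with potentials $q_f,q_g$.

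The core step is Green's second identity on $\Omega\setminus\overline{B(x;\epsilon)}$ applied to the pair $(w_{0,f},\theta_g(\cdot-x))$ with respect to the operator $-\Delta+q_g$. Using $(-\Delta+q_f)w_{0,f}=0$ one rewrites $(-\Delta+q_g)w_{0,f}=(q_g-q_f)w_{0,f}$, while $(-\Delta+q_g)\theta_g(\cdot-x)=\delta_x$. Letting $\epsilon\to 0$ (in the spirit of \eqref{constraint-1}) yields
\begin{equation*}
\int_\Omega (q_g-q_f)w_{0,f}(y)\theta_g(y-x)\,dy - w_{0,f}(x)\chi_\Omega(x) = \int_{\partial\Omega}\bigl[\theta_g\,\partial_\eta w_{0,f}-w_{0,f}\,\partial_\eta\theta_g\bigr]\,ds_y.
\end{equation*}
Applying the identical procedure to $(w_{0,g},\theta_g(\cdot-x))$ and using $(-\Delta+q_g)w_{0,g}=0$ produces $-w_{0,g}(x)\chi_\Omega(x)$ on the left and an analogous surface integral with $w_{0,g}$ in place of $w_{0,f}$. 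Subtracting the two, the boundary integrand becomes $\theta_g(\partial_\eta w_{0,f}-\partial_\eta w_{0,g})-(w_{0,f}-w_{0,g})\partial_\eta\theta_g$, which vanishes identically since $w_{0,f}|_{\partial\Omega}=w_{0,g}|_{\partial\Omega}=\varphi_0$ and $\Lambda_{q_f}\varphi_0=\Lambda_{q_g}\varphi_0$. This proves the first equality of \eqref{eq:identity-potential}; the second one follows by interchanging $f\leftrightarrow g$ and using $\theta_f$ in the dual role.

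For the equivalence, the direction (ii)$\Rightarrow$(i) is immediate from \eqref{eq:identity-potential}. For (i)$\Rightarrow$(ii), subtracting the PDEs $-\Delta w_{0,f}+q_f w_{0,f}=0$ and $-\Delta w_{0,g}+q_g w_{0,g}=0$ under the assumption $w_{0,f}=w_{0,g}$ a.e.\ gives $(q_f-q_g)w_{0,f}=0$ a.e.\ in $\Omega$; choosing $\varphi_0=f|_{\partial\Omega}=g|_{\partial\Omega}$, uniqueness forces $w_{0,f}\equiv f$, which is away from zero, hence $q_f=q_g$ a.e. Alternatively, by varying $\varphi_0$ and combining with a CGO density argument in the spirit of Lemma~\ref{lemma:uniqueness-potentials} one reaches the same conclusion.

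The main technical obstacle is the rigorous handling of the Schr\"odinger fundamental solutions $\theta_f,\theta_g$ when the potentials are merely $L^\infty$: one must establish their existence as Green's functions of $-\Delta+q$, ensure enough interior regularity for the small-ball exhaustion, and interpret the boundary integrals in the $H^{\pm 1/2}$ duality consistent with the weak formulation \eqref{weak-D-N-Schrodinger}, since $\partial_\eta w_{0,f}$ is only well-defined as an element of $H^{-1/2}(\partial\Omega)$. The $\epsilon\to 0$ treatment of the $\delta$-singularity parallels the technique employed in the proof of Theorem~\ref{th:Cauchy-theorem}.
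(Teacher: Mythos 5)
Your argument is correct and yields exactly the identity \eqref{eq:identity-potential}, but it reaches it by a more elementary route than the paper. The paper first represents $w_{0,f}$ and $w_{0,g}$ via the Green--Vekua formula \eqref{eq:Green-Vekua-3}, using that $\nabla\theta_f-(\nabla f/f)\theta_f$ is a purely vector fundamental solution of $D-M^{\nabla f/f}C$ (the factorization \eqref{eq:factorization-3}), then converts these to the Schr\"odinger representation formulas \eqref{eq:computations-2} with the pairings $(\Lambda_{q_f}\varphi_0,\theta_f(\cdot-x))$ and $(\Lambda_{q_g}\varphi_0,\theta_g(\cdot-x))$, and finally subtracts and invokes the weak formulation \eqref{weak-D-N-Schrodinger}. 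You bypass the Vekua machinery entirely and apply Green's second identity for $-\Delta+q_g$ directly to the two pairs $(w_{0,f},\theta_g(\cdot-x))$ and $(w_{0,g},\theta_g(\cdot-x))$; the cancellation mechanism (equality of Cauchy data, i.e.\ $w_{0,f}|_{\partial\Omega}=w_{0,g}|_{\partial\Omega}$ and $\Lambda_{q_f}\varphi_0=\Lambda_{q_g}\varphi_0$) is the same, and your pairing is arguably cleaner since both solutions are tested against the single kernel $\theta_g$. What the paper's route buys is the thematic point of the manuscript --- that these Schr\"odinger identities are shadows of Cauchy-type formulas for the Vekua operator; what your route buys is brevity and independence from the fundamental solutions of $D-M^{\alpha}C$. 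On the equivalence, your treatment of (i)$\Rightarrow$(ii) is in fact more careful than the paper's: the paper concludes $q_f=q_g$ from $(q_f-q_g)w_{0,f}=0$ without addressing the possible vanishing of $w_{0,f}$, whereas you fix this by taking the boundary datum $\varphi_0=f|_{\partial\Omega}=g|_{\partial\Omega}$ so that $w_{0,f}\equiv f$ is bounded away from zero. The technical caveats you flag (existence and interior regularity of the Green's functions for merely bounded potentials, and reading $\partial_\eta w_{0,f}$ in the $H^{\pm 1/2}$ duality) are real, but the paper operates under the same implicit assumptions, so they do not count against your proof relative to the original.
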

\begin{proof}
Let $\varphi_0\in H^{1/2}(\partial\Omega)$, then there exists $w_{0,f}, w_{0,g}\in H^1(\Omega)$ such that $w_{0,f}/f$, $w_{0,g}/g$ are solutions of the conductivity equation \eqref{eq:conductivity-2} such that $w_{0,f}|_{\partial\Omega}=\varphi_0=w_{0,g}|_{\partial\Omega}$. Given $\theta_f$ and $\theta_g$ be scalar fundamental solutions of the Schr\"{o}dinger equation $-\Delta u+q_f u=0$ and $-\Delta u+q_g u=0$, respectively, with $q_f=\Delta f/f$ and $q_g=\Delta g/g$. 

By the factorization \eqref{eq:factorization-3}, we have that $\nabla \theta_f(y-x)-(\nabla f/f)\theta_f(y-x)$ and $\nabla \theta_g(y-x)-(\nabla g/g)\theta_g(y-x)$ are purely fundamental solutions of the operator $D-M^{\nabla f/f}C$ and $D-M^{\nabla g/g}C$, respectively.

Applying the Green-Vekua formula \eqref{eq:Green-Vekua-3}, we get that for $x\in \Omega$
\begin{align}\label{eq:computation-1}
\int_{\partial\Omega} -\left(\nabla \theta_f(y-x)-(\nabla f/f)\theta_f(y-x)\right)\cdot \eta \varphi_0 \, ds_y+\left(\Lambda_f(\varphi_0/f), f^{-1}\theta_f(\cdot-x)\right)_{\partial\Omega}&=w_{0,f}(x),\nonumber \\
\int_{\partial\Omega} -\left(\nabla \theta_g(y-x)-(\nabla g/g)\theta_g(y-x)\right)\cdot \eta \varphi_0 \, ds_y+\left(\Lambda_g(\varphi_0/g), g^{-1}\theta_g(\cdot-x)\right)_{\partial\Omega}&=w_{0,g}(x),
\end{align}
and both vanishes if $x\in \mathbb{R}^n\setminus \overline{\Omega}$. By the interrelation between the Dirichlet-to-Neumann maps associated to the conductivity equation \eqref{eq:D-N-conductivity} and Schr\"{o}dinger equation \eqref{eq:D-N-Schrodinger} with potentials $q_f=\Delta f/f$, $q_g=\Delta g/g$ (see Lemma \ref{lemma-D-N}), we get that \eqref{eq:computation-1} reduces to
\begin{align}\label{eq:computations-2}
\int_{\partial\Omega} -\nabla \theta_f(y-x)\cdot \eta \varphi_0 \, ds_y+\left(\Lambda_{q_f}(\varphi_0), \theta_f(\cdot-x)\right)_{\partial\Omega} =\left\{
   \begin{array}{ll}
      w_{0,f}(x), \ & x\in \Omega,\\
      0,           & x\in \R^n\setminus \overline{\Omega}.
   \end{array}
\right.\nonumber \\
\int_{\partial\Omega} -\nabla \theta_g(y-x)\cdot \eta \varphi_0 \, ds_y+ \left(\Lambda_{q_g}(\varphi_0), \theta_g(\cdot-x)\right)_{\partial\Omega} =\left\{
   \begin{array}{ll}
      w_{0,g}(x), \ & x\in \Omega,\\
      0,           & x\in \R^n\setminus \overline{\Omega}.
   \end{array}
\right.
\end{align}
Using that $f|_{\partial\Omega}=g|_{\partial\Omega},$ $\nabla f|_{\partial\Omega}\cdot \eta=\nabla g|_{\partial\Omega}\cdot \eta$, and the relation \eqref{eq:D-N-relation}, we easily get that  $\Lambda_{q_f}=\Lambda_{q_g}$. Subtracting both expressions in \eqref{eq:computations-2}, using the weak formulation \eqref{weak-D-N-Schrodinger}, and the fact that $\theta_f$, $\theta_g$ are fundamental solutions of the corresponding Schr\"{o}dinger equation, we get that for $x\in \Omega$
\begin{align}\label{eq:equivalences}
w_{0,f}(x)-w_{0,g}(x)&=\int_{\partial\Omega} -\left(\nabla \theta_f-\nabla \theta_g\right)\cdot \eta \varphi_0 \, ds_y+ \left(\Lambda_{q_f}(\varphi_0), \theta_f(\cdot-x)-\theta_g(\cdot-x)\right)_{\partial\Omega}\nonumber\\
&=\int_{\Omega} -\div(\left(\nabla \theta_f-\nabla \theta_g\right)w_{0,f})+ \left(\nabla \theta_f-\nabla \theta_g \right)\cdot \nabla w_{0,f}+\frac{\Delta f}{f}(\theta_f-\theta_g)w_{0,f} \, dy\nonumber\\
&=\int_{\Omega} \left(\frac{\Delta g}{g}-\frac{\Delta f}{f}\right) w_{0,f}(y)\theta_g(y-x)\, dy,
\end{align}
and it vanishes if $x\in \mathbb{R}^n\setminus \overline{\Omega}$. Similarly,
\begin{align*}
    \int_{\Omega} \left(\frac{\Delta g}{g}-\frac{\Delta f}{f}\right) w_{0,g}(y)\theta_f(y-x)\, dy=\left\{
   \begin{array}{ll}
      w_{0,f}(x)-w_{0,g}(x), \ & x\in \Omega,\\
      0,           & x\in \R^n\setminus \overline{\Omega}.
   \end{array}
\right.
\end{align*}
If $w_{0,f}=w_{0,g}$ a.e. in $\Omega$, then $0=\Delta(w_{0,f}-w_{0,g})=\left(\frac{\Delta g}{g}-\frac{\Delta f}{f}\right)w_{0,f}$, which easily implies that $\frac{\Delta g}{g}=\frac{\Delta f}{f}$ a.e. in $\Omega$. Now, if $\frac{\Delta g}{g}=\frac{\Delta f}{f}$ a.e. in $\Omega$, by \eqref{eq:equivalences}, we obtain that $w_{0,f}=w_{0,g}$ a.e. in $\Omega$, which ended the proof.
\end{proof}
Notice that the identity \eqref{eq:identity-potential} resembles the classical formula that relates the difference of potentials with the difference of boundary measurements $\Lambda_{q_f}-\Lambda_{q_g}$, namely
\[
\int_{\Omega}\left(\frac{\Delta f}{f}-\frac{\Delta g}{g}\right) w_{0,f}(y) w_{0,g}(y)\, dy=\left((\Lambda_{q_f}-\Lambda_{q_g})\varphi_{0,f}, \varphi_{0,g}\right)_{\partial\Omega},
\]
where $w_{0,f}, w_{0,g}$ are solutions of the Schr\"{o}dinger equation in $\Omega$ with potential $q_f=\Delta f/f$ and $q_g=\Delta g/g$; and boundary values $w_{0,f}|_{\partial\Omega}=\varphi_{0,f}$ and $w_{0,g}|_{\partial\Omega}=\varphi_{0,g}$, respectively.
The last identity is indeed the cornerstone of the proof of Lemma \ref{lemma:uniqueness-potentials} provided by Sylvester and Uhlmann in \cite{Sylvester-Uhlmann}. 
\subsection{Uniqueness result for Lipschitz conductivities}
In \cite[Th.\ 1.1]{Paivarinta2003}, P\"{a}iv\"{a}rinta, Panchenko and Uhlmann proved the existence of complex geometrical optics solutions for conductivities living in $W^{1,\infty}(\mathbb{R}^n),$ with $n\geq 3$. Let $-1<\delta<0$, let us consider the weighted $L^2$-space which is defined by 
\[
L_{\delta}^2(\mathbb{R}^n):=\left\{v\colon \int (1+|x|^2)^{\delta/2} |v(x)|^2\, dx<\infty\right\},
\]
and $H_{\delta}^2(\mathbb{R}^n)$ will denote the corresponding Sobolev space defined by interpolation.
\begin{theorem}\cite[Th.\ 1.1]{Paivarinta2003}\label{th:CGO}
If $f\in W^{1,\infty}(\mathbb{R}^n)$ and $f=1$ outside a large ball, then for $|\zeta|$ sufficiently large, there exists a unique solution of the conductivity equation 
\begin{equation}
    \div(f^2\nabla u)=0, \quad \text{ in }\mathbb{R}^n
\end{equation}
of the form 
\begin{equation}\label{eq:CGO-Lipschitz}
u=e^{x\cdot\zeta}\left(f^{-1}+\psi_f(x,\zeta)\right),
\end{equation}
with $\psi_f\in L_{\delta}^2(\mathbb{R}^n)$. Moreover, $\psi_f$ has the form
\begin{equation}
\psi_f(x,\zeta)=\left(\omega_0(x,\zeta)-f^{-1}\right)+\omega_1(x,\zeta),    
\end{equation}
where $\omega_0$ and $ \omega_1$ satisfy
\begin{equation}\label{eq:limits}
\lim_{|\zeta|\rightarrow \infty} \|\omega_0(x,\zeta)-f^{-1}\|_{H_{\delta}^1}=0 \text{ and } \lim_{|\zeta|\rightarrow \infty} \|\omega_1(x,\zeta)\|_{L_{\delta}^2}=0.
\end{equation}
\end{theorem}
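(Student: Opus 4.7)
The plan is to construct $\psi_f$ as the unique fixed point of an integral equation obtained by conjugating the conductivity equation with $e^{x\cdot\zeta}$ and inverting the constant-coefficient principal part via Faddeev's Green function $G_\zeta$; the Lipschitz regularity of $f$ will be handled through a mollification argument with scale tuned to $|\zeta|$, and the decomposition $\psi_f=(\omega_0-f^{-1})+\omega_1$ will be extracted from the same split.

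First I would rewrite $\operatorname{div}(f^2\nabla u)=0$ in the non-divergence form $\Delta u+2\nabla\log f\cdot\nabla u=0$, substitute $u=e^{x\cdot\zeta}(f^{-1}+\psi)$, and use the null condition $\zeta\cdot\zeta=0$ to obtain
\[
(\Delta+2\zeta\cdot\nabla)\psi+2\nabla\log f\cdot(\zeta\psi+\nabla\psi)=\frac{\Delta f}{f^{2}}.
\]
The source is only a distribution for Lipschitz $f$, but the identity $\Delta f/f^{2}=\operatorname{div}(\nabla f/f^{2})+2|\nabla f|^{2}/f^{3}$ presents it as the divergence of a compactly supported $L^{\infty}$ vector plus a compactly supported $L^{\infty}$ function. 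Applying $G_\zeta$ and using the Sylvester--Uhlmann weighted estimates
\[
\|G_\zeta\ast h\|_{L^{2}_{\delta}}\leq\frac{C}{|\zeta|}\|h\|_{L^{2}_{\delta+1}},\qquad \|\nabla(G_\zeta\ast h)\|_{L^{2}_{\delta}}\leq C\|h\|_{L^{2}_{\delta+1}},\qquad -1<\delta<0,
\]
turns the PDE into the fixed-point equation
\[
\psi=G_\zeta\ast\Bigl(\tfrac{\Delta f}{f^{2}}\Bigr)-2\,G_\zeta\ast\bigl(\nabla\log f\cdot(\zeta\psi+\nabla\psi)\bigr),
\]
analyzed in the Banach space $X=\{\psi:\psi,\nabla\psi\in L^{2}_{\delta}\}$ with norm $\|\psi\|_X=\|\psi\|_{L^{2}_{\delta}}+|\zeta|^{-1}\|\nabla\psi\|_{L^{2}_{\delta}}$.

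The main obstacle is that the perturbation operator $T\psi:=-2G_\zeta\ast(\nabla\log f\cdot(\zeta\psi+\nabla\psi))$ has naive $X$-operator norm only $\lesssim\|\nabla\log f\|_{L^{\infty}}$, with no $|\zeta|$-smallness, so direct Banach iteration fails; moreover $\nabla\log f$ cannot be uniformly smoothed because $f$ is merely Lipschitz. The Päiv\"arinta--Panchenko--Uhlmann resolution is a two-scale split $\nabla\log f=\nabla\log f_{\varepsilon}+r_{\varepsilon}$ with $f_{\varepsilon}=f\ast\rho_{\varepsilon}$. On the smooth piece one integrates by parts to move a derivative onto $G_\zeta$, paying $\|\nabla^{2}\log f_{\varepsilon}\|_{L^{\infty}}=O(\varepsilon^{-1})$ against the $1/|\zeta|$ gain. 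On the rough remainder one exploits that $\|r_{\varepsilon}\|_{L^{2}_{\mathrm{loc}}}\to 0$ (since $\nabla f$ is compactly supported and bounded, hence locally $L^2$), combined with a Bony paraproduct/commutator estimate of the form $\|[G_\zeta,r_{\varepsilon}\cdot]\|_{X\to X}=o_{\varepsilon}(1)$. Balancing $\varepsilon=\varepsilon(|\zeta|)$ so that $\varepsilon\to 0$ and $\varepsilon|\zeta|\to\infty$, for instance $\varepsilon=|\zeta|^{-1/2}$, turns the full perturbation into a strict contraction on $X$ for all $|\zeta|$ sufficiently large, and Banach's theorem yields the required unique $\psi_f$.

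For the decomposition I would define $\omega_0:=f_{\varepsilon}^{-1}+\psi_{f_{\varepsilon}}$, where $\psi_{f_{\varepsilon}}$ is the CGO remainder produced by the \emph{same} fixed-point construction applied to the smoothed conductivity $f_{\varepsilon}$ (with the same $\varepsilon=\varepsilon(|\zeta|)$). Since $f_{\varepsilon}\in W^{2,\infty}$, the classical smooth-conductivity bound gives $\|\psi_{f_{\varepsilon}}\|_{H^{1}_{\delta}}\lesssim\|\Delta f_{\varepsilon}/f_{\varepsilon}^{2}\|_{L^{2}_{\delta+1}}/|\zeta|=O(\varepsilon^{-1}/|\zeta|)\to 0$, while $\|f_{\varepsilon}^{-1}-f^{-1}\|_{H^{1}_{\delta}}\to 0$ by standard mollifier approximation (valid because $f^{-1}-1$ is compactly supported and Lipschitz); together these give $\omega_0-f^{-1}\to 0$ in $H^{1}_{\delta}$. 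Setting $\omega_1:=\psi_f-(\omega_0-f^{-1})=\psi_f-\psi_{f_{\varepsilon}}-(f_{\varepsilon}^{-1}-f^{-1})$ and subtracting the two fixed-point equations shows that $\omega_1$ is driven only by the Lipschitz remainder $r_{\varepsilon}$ and the corresponding source difference, both acting on already bounded quantities; one more application of the rough-remainder estimate from the previous paragraph then yields $\|\omega_1\|_{L^{2}_{\delta}}\to 0$, completing the stated asymptotics.
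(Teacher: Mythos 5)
This statement is not proved in the paper at all: it is quoted verbatim as \cite[Th.\ 1.1]{Paivarinta2003} and used as a black box in the proof of Theorem \ref{th:uniqueness}, so the only meaningful comparison is with P\"aiv\"arinta--Panchenko--Uhlmann's original argument. Your reconstruction gets the architecture of that argument essentially right: the conjugated equation $(\Delta+2\zeta\cdot\nabla)\psi+2\nabla\log f\cdot(\zeta\psi+\nabla\psi)=\Delta f/f^{2}$ is correct (the cancellation of the $\zeta$-terms against $f^{-1}$ checks out), the Sylvester--Uhlmann weighted estimates for Faddeev's kernel $G_\zeta$ are the right tools, the source term is correctly recognized as divergence-form for merely Lipschitz $f$, the $\zeta$-dependent mollification with $\varepsilon\to 0$, $\varepsilon|\zeta|\to\infty$ is indeed the PPU mechanism, and your identification of $\omega_0$ as the CGO amplitude for the smoothed conductivity $f_\varepsilon$, with $\|\psi_{f_\varepsilon}\|_{H^1_\delta}=O(\varepsilon^{-1}/|\zeta|)$ and $\|f_\varepsilon^{-1}-f^{-1}\|_{H^1_\delta}\to 0$, is a faithful account of how the decomposition and the two limits in \eqref{eq:limits} arise.

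However, there is a genuine gap exactly at the decisive step, and you have located it yourself before papering over it. For the rough remainder $r_\varepsilon=\nabla\log f-\nabla\log f_\varepsilon$ you invoke $\|r_\varepsilon\|_{L^2_{\mathrm{loc}}}\to 0$ together with an asserted estimate $\|[G_\zeta,r_\varepsilon\cdot]\|_{X\to X}=o_\varepsilon(1)$; neither suffices nor is proved. Smallness of $r_\varepsilon$ in $L^2$ does not control the operator norm of $\psi\mapsto G_\zeta\bigl(r_\varepsilon\cdot(\zeta\psi+\nabla\psi)\bigr)$ on your $L^2$-based space $X$: the bound $\|r_\varepsilon\psi\|_{L^2_{\delta+1}}\leq\|r_\varepsilon\|_{L^\infty}\|\psi\|_{L^2_{\delta+1}}$ is the only one available when the iterates have no integrability beyond $L^2$, and $\|r_\varepsilon\|_{L^\infty}\sim\|\nabla f\|_{L^\infty}$ does \emph{not} tend to zero for a merely Lipschitz $f$; a H\"older argument exploiting $\|r_\varepsilon\|_{L^p}\to 0$ for $p<\infty$ would require $\psi\in L^q$ for some $q>2$, which your fixed-point space does not provide. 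The commutator estimate you name does not appear in \cite{Paivarinta2003} and, as stated, there is no soft reason it should hold uniformly in $\zeta$ (the kernel $G_\zeta$ concentrates on a characteristic set at frequency $\sim|\zeta|$, where multiplication by a non-small $L^\infty$ function mixes frequencies badly). Controlling precisely this term is the technical heart of the PPU paper, handled there by a bespoke analysis of the operators built from $G_\zeta$ and the first-order coefficient rather than by a generic paraproduct bound; until that step is supplied, your map is only bounded, not a contraction, so existence, uniqueness, and the $\omega_1$-limit all remain unestablished.
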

 Observe that for Lipschitz conductivities, we do not have enough regularity to write normal derivatives as we did in Proposition \ref{prop:integral-Cauchy}. Thus, using the weak formulation of the Dirichlet-to-Neumann map \eqref{eq:weakly-D-N}, we have that the integral formula \eqref{eq:green-formula-type-2} can be rewritten as follows
\begin{align}\label{eq:rewrite}
&\int_{\partial \Omega} -E(y-x)\cdot \eta(y)\, (w_0/f)|_{\partial\Omega}\, ds_y \,+\, \left(\Lambda_{f}(\varphi_0),\dfrac{1}{\sigma_n(n-2)}\frac{f^{-2}}{|x-y|^{n-2}}\right)_{\partial\Omega}\nonumber\\
&+\int_{\Omega} \dfrac{2}{\sigma_n(n-2)}\frac{1}{|x-y|^{n-2}}\frac{\nabla f}{f}\cdot \nabla (w_0/f) \, dy=\left\{
   \begin{array}{ll}
      (w_0/f)(x), \ & x\in \Omega,\\
      0,           & x\in \R^n\setminus \overline{\Omega}.
   \end{array}
\right.
\end{align}
where $w_0/f$ solves the conductivity equation \eqref{eq:conductivity-Vekua} in $\Omega$ and $(w_0/f)|_{\partial\Omega}=\varphi_0$.

 Let $f, g\in W^{1,\infty}(\Omega)$ be conductivities with $\Lambda_f=\Lambda_g$. If we extend $f,g\in W^{1,\infty}(\mathbb{R}^n)$ with $f= g$ in $\R^n\setminus \overline{\Omega}$ and $f=g=1$ outside a large ball, then Sylvester and Uhlmann proved that the complex geometrical solutions as in \eqref{eq:CGO-Lipschitz}  coincides in $\R^n\setminus \overline{\Omega}$, see \cite{Sylvester-Uhlmann}. Now, we provide a similar result at the interior domain, our starting point is to analyze the integral representation formula \eqref{eq:rewrite}, and after applying a density argument we show that for any boundary value $\varphi_0\in H^{1/2}(\partial\Omega)$, the unique extensions in $H^1(\Omega)$ as solutions of the conductivity equation are identically in $\Omega$, whenever the boundary measurements provided by the Dirichlet-to-Neumann maps are equal. 

\begin{proposition}\label{prop:extensions-equal}
    Let $\Omega\subset \mathbb{R}^n$ be a bounded domain with Lipschitz boundary and let $f, g\in W^{1,\infty}(\Omega)$ conductivities away from zero. If $\Lambda_f=\Lambda_g$, then the Dirichlet-to-Neumann map uniquely determines the solutions of the conductivity equation in $\Omega$.
\end{proposition}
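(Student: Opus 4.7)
Plan: The plan is to apply the Lipschitz integral representation \eqref{eq:rewrite} to both $H^1(\Omega)$-solutions $u_f=w_{0,f}/f$ and $u_g=w_{0,g}/g$ sharing the Dirichlet datum $\varphi_0$, and then subtract. Boundary determination (Lemma~\ref{lemma:boundary}) gives $f|_{\partial\Omega}=g|_{\partial\Omega}$, hence $f^{-2}|_{\partial\Omega}=g^{-2}|_{\partial\Omega}$; together with $\Lambda_f=\Lambda_g$, this cancels both boundary integrals in \eqref{eq:rewrite}. What remains is, for $x\in\Omega$,
\[
u_f(x)-u_g(x)=\frac{2}{\sigma_n(n-2)}\int_\Omega\frac{F(y)}{|x-y|^{n-2}}\,dy,
\]
while the same integral vanishes for $x\in\mathbb{R}^n\setminus\overline{\Omega}$, where
\[
F(y):=\frac{\nabla f(y)}{f(y)}\cdot\nabla u_f(y)-\frac{\nabla g(y)}{g(y)}\cdot\nabla u_g(y)\in L^2(\Omega).
\]

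Equivalently, the Newtonian potential of $F$ (supported in $\Omega$) vanishes in the exterior and coincides with $\Psi:=u_f-u_g$ in the interior. By Runge-type approximation of harmonic functions on $\Omega$ by single-pole kernels $|x-\cdot|^{2-n}$ with $x\in\mathbb{R}^n\setminus\overline{\Omega}$, this exterior vanishing is equivalent to $\int_\Omega F\,h\,dy=0$ for every harmonic $h$ on $\Omega$. Since $F\in L^2(\Omega)$, the Newton potential and its gradient are continuous across $\partial\Omega$, so the traces of $\Psi$ and of $\partial_\eta\Psi$ also vanish from the interior side, consistently with what Lemma~\ref{lemma:boundary} already guarantees.

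To promote this to $\Psi\equiv 0$ in $\Omega$, I would carry out the density argument announced by the author. Extend $f,g$ to $W^{1,\infty}(\mathbb{R}^n)$ with $f=g$ in $\mathbb{R}^n\setminus\overline{\Omega}$ and $f=g=1$ outside a large ball, and apply Theorem~\ref{th:CGO} to produce CGO solutions $v_f^\zeta, v_g^\zeta$ of the extended conductivity equations. The Sylvester-Uhlmann exterior identity \cite{Sylvester-Uhlmann} yields $v_f^\zeta\equiv v_g^\zeta$ in $\mathbb{R}^n\setminus\overline{\Omega}$, hence matching boundary traces $\varphi_0^\zeta$; well-posedness of the Dirichlet problem then identifies $u_f=v_f^\zeta|_\Omega$ and $u_g=v_g^\zeta|_\Omega$ for this datum. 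Substituting the asymptotic form $v_f^\zeta=e^{x\cdot\zeta}(f^{-1}+\psi_f)$ with $\|\psi_f\|_{L_{\delta}^2}\to 0$ as $|\zeta|\to\infty$ into the integral identity, and testing the orthogonality of $F$ against a complementary harmonic exponential $h=e^{x\cdot\zeta'}$ with $\zeta'\cdot\zeta'=0$ and $\zeta+\zeta'=-i\xi$, a Fourier-type argument in the spirit of \cite{Sylvester-Uhlmann} extracts the vanishing of the Fourier transform of $f^{-1}-g^{-1}$ at every $\xi$; density of the CGO boundary traces in $H^{1/2}(\partial\Omega)$ together with continuity of the direct solution maps $\varphi_0\mapsto u_f,u_g$ from $H^{1/2}(\partial\Omega)$ to $H^1(\Omega)$ then propagate the equality $u_f=u_g$ from this dense subset to all $\varphi_0\in H^{1/2}(\partial\Omega)$.

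The hard part is precisely this final CGO/Fourier step. The integral identity alone does not decouple to force $\Psi=0$, because $F$ couples both unknown solutions to both unknown conductivities through first-order terms, and there is no direct unique-continuation argument for the Poisson equation $\Delta\Psi=-2F$ satisfied by $\Psi$ with zero Cauchy data but prescribed right-hand side. The representation \eqref{eq:rewrite} should be read as the bridge that converts the boundary hypothesis $\Lambda_f=\Lambda_g$ into a bulk orthogonality condition on $F$ against harmonics, to which the Lipschitz CGO machinery of \cite{Paivarinta2003} combined with the Sylvester-Uhlmann exterior identity can be applied in order to close the proof.
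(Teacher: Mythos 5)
Your reduction matches the paper's up to the key identity: you apply \eqref{eq:rewrite} to $u_f=w_{0,f}/f$ and $u_g=w_{0,g}/g$ with the same Dirichlet datum, use Lemma~\ref{lemma:boundary} together with $\Lambda_f=\Lambda_g$ to cancel both boundary terms, and arrive at the Newtonian potential of $F$ (the paper's $\rho$) equalling $\Psi:=u_f-u_g$ in $\Omega$ and $0$ in $\mathbb{R}^n\setminus\overline{\Omega}$, whence $\Psi\in H^2(\Omega)$ has vanishing Cauchy data and satisfies $\Delta\Psi=-2F$. Up to this point you are in step with the paper's proof.

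The gap is everything after that: the statement to be proved is $\Psi\equiv 0$, and your proposal does not prove it. You explicitly defer ``the hard part'' to an unexecuted CGO/Fourier argument whose endpoint would be the vanishing of the Fourier transform of $f^{-1}-g^{-1}$, i.e.\ you would be establishing $f=g$ (all of Theorem~\ref{th:uniqueness}) inside the proof of this proposition, inverting the paper's logical order, since the paper deduces Theorem~\ref{th:uniqueness} \emph{from} this proposition together with Theorem~\ref{th:CGO}. The orthogonality $\int_\Omega F\,h\,dy=0$ against harmonic $h$ that you extract is also not, by itself, a mechanism for concluding: $F$ couples the gradients of two distinct unknown solutions to two unknown conductivities, and you supply no identity that converts this orthogonality into $\Psi=0$. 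The paper instead closes the argument entirely in the interior: it applies Green's identity to $\Psi$ (using its zero Cauchy data) and $u(y)=e^{iy\cdot k}$ to obtain $\int_\Omega\bigl(-|k|^2\Psi+2F\bigr)e^{iy\cdot k}\,dy=0$ for all $k$, argues by Fourier inversion that $2F=|k|^2\Psi$ in $\Omega$, and compares two moduli $|k_1|\neq|k_2|$ to force $\Psi=0$. Your instinct that ``the integral identity alone does not decouple'' is not unreasonable --- the Fourier step, read literally, only yields $\widehat{2F}(k)=|k|^2\widehat{\Psi}(k)$ for the zero extensions, which restates $\Delta\Psi=-2F$ rather than a pointwise identity for each fixed $k$ --- but for your submission to stand as a proof you must either execute that step or provide a complete substitute. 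As written, the proposal is a plan, not a proof.
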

\begin{proof}
Let $\varphi_0\in H^{1/2}(\partial\Omega)$, then there exists $w_{0,f}, w_{0,g}\in H^1(\Omega)$ such that $w_{0,f}/f$, $w_{0,g}/g$ are solutions of the conductivity equation \eqref{eq:conductivity-Vekua} in $\Omega$ with $(w_{0,f}/f)|_{\partial\Omega}=\varphi_0=(w_{0,g}/g)|_{\partial\Omega}$.
By \eqref{eq:rewrite}, we have that for $x\in \Omega$
\begin{align}\label{eq:computation-6}
f^{-1}w_{0,f}(x)&=\int_{\partial\Omega} -E(y-x)\cdot \eta \varphi_0 \, ds_y \,+\, \left(\Lambda_{f}(\varphi_0),\dfrac{1}{\sigma_n(n-2)}\frac{f^{-2}}{|x-y|^{n-2}}\right)_{\partial\Omega} \\
&+\frac{2}{\sigma_n(n-2)}\int_{\Omega} \frac{1}{|x-y|^{n-2}} \frac{\nabla f}{f}\cdot \nabla(w_0/f)\, dy, \nonumber \\
g^{-1}w_{0,g}(x)&=\int_{\partial\Omega} -E(y-x)\cdot \eta \varphi_0 \, ds_y \,+\, \left(\Lambda_{g}(\varphi_0),\dfrac{1}{\sigma_n(n-2)}\frac{g^{-2}}{|x-y|^{n-2}}\right)_{\partial\Omega}\label{eq:computation-7}\\
&+\frac{2}{\sigma_n(n-2)}\int_{\Omega} \frac{1}{|x-y|^{n-2}}\frac{\nabla g}{g}\cdot \nabla(w_{0,g}/g)\, dy,\nonumber
\end{align}
and they are both identically zero if $x\in \mathbb{R}^n\setminus \overline{\Omega}$.
Let us denote by 
\begin{equation*}
\rho:=\frac{\nabla f}{f}\cdot \nabla(w_{0,f}/f)-\frac{\nabla g}{g}\cdot \nabla(w_{0,g}/g), 
\end{equation*}
with $\rho\in L^2(\Omega)$. Observe that the surface integrals in \eqref{eq:computation-6} and \eqref{eq:computation-7} are equal. By using that $\Lambda_f(\varphi_0)=\Lambda_g(\varphi_0)$ and $f|_{\partial\Omega}=g|_{\partial\Omega}$, then the pairings in the boundary satisfy
\begin{equation*}
    \left(\Lambda_{f}(\varphi_0),\dfrac{1}{\sigma_n(n-2)}\frac{f^{-2}}{|x-y|^{n-2}}\right)_{\partial\Omega}=\left(\Lambda_{g}(\varphi_0),\dfrac{1}{\sigma_n(n-2)}\frac{g^{-2}}{|x-y|^{n-2}}\right)_{\partial\Omega}.
\end{equation*}
Therefore,
\begin{align}\label{eq:Newton-potential}
\frac{2}{\sigma_n(n-2)}\int_{\Omega} \frac{\rho(y)}{|x-y|^{n-2}}\, dy=\left\{
   \begin{array}{ll}
      f^{-1}w_{0,f}(x)-g^{-1}w_{0,g}(x), \ & x\in \Omega,\\
      0,           & x\in \R^n\setminus \overline{\Omega}.
   \end{array}
\right.
\end{align}
Notice that the left-hand side of \eqref{eq:Newton-potential} corresponds to the Newton potential acting on the function $\rho\in L^2(\Omega)$. By potential theory, we get that $f^{-1}w_{0,f}-g^{-1}w_{0,g}\in H^2(\Omega)$.
Consequently, $\Delta (f^{-1}w_{0,f}-g^{-1}w_{0,g})$ is well-defined 
and applying the Laplacian operator in both sides of \eqref{eq:Newton-potential}, we obtain that
\begin{align}\label{eq:uniqueness-1}
\Delta(f^{-1}w_{0,f}-g^{-1}w_{0,g})(x)=-2\rho(x), \qquad x\in \Omega.
\end{align}
Let $k\in \mathbb{C}^n$ be fixed and $u\in C^2(\Omega)\cap C^1(\overline{\Omega})$. By the Green's identity, we get
\begin{align*}
    0&=\int_{\partial\Omega} u|_{\partial\Omega}\nabla (f^{-1}w_{0,f}-g^{-1}w_{0,g})|_{\partial\Omega}\cdot \eta-(f^{-1}w_{0,f}-g^{-1}w_{0,g})|_{\partial\Omega}\nabla u|_{\partial\Omega}\cdot \eta\, ds_y\\
    &=\int_{\Omega}\Delta u\,(f^{-1}w_{0,f}-g^{-1}w_{0,g})-u\, \Delta(f^{-1}w_{0,f}-g^{-1}w_{0,g})\,  dy\\
    &=\int_{\Omega}\Delta u\,(f^{-1}w_{0,f}-g^{-1}w_{0,g})+ 2u \rho\, dy
\end{align*}
Taking $u(y)=e^{iy\cdot k}$, then 
\begin{align*}
0=\int_{\Omega} \left(-|k|^2(f^{-1}w_{0,f}-g^{-1}w_{0,g})+2\rho\right)e^{iy \cdot k}\, dy.
\end{align*}
Since it holds for every $k\in \mathbb{C}^n$, then by Fourier inversion we get
\begin{equation}\label{eq:rho}
2\rho=|k|^2(f^{-1}w_{0,f}(x)-g^{-1}w_{0,g}), \qquad \text{ in } \Omega.     
\end{equation}
Substituting \eqref{eq:rho} into \eqref{eq:uniqueness-1}, we get that $\Delta(f^{-1}w_{0,f}-g^{-1}w_{0,g})=-|k|^2(f^{-1}w_{0,f}(x)-g^{-1}w_{0,g})$. Choosing $k_1,k_2\in \mathbb{C}^n$ such that $|k_1|^2\ne |k_2|^2$, then $f^{-1}w_{0,f}-g^{-1}w_{0,g}$ simultaneously satisfies $\Delta(f^{-1}w_{0,f}-g^{-1}w_{0,g})=-|k_i|^2(f^{-1}w_{0,f}(x)-g^{-1}w_{0,g})$ in $\Omega$, $i=1,2$. Subtracting them, we obtain that $(|k_1|^2-|k_2|^2)(f^{-1}w_{0,f}-g^{-1}w_{0,g})=0$ in $\Omega$. Therefore, 
\begin{equation*}
\frac{w_{0,f}}{f}=\frac{w_{0,g}}{g}, \qquad \text{ in }\Omega.
\end{equation*}
In conclusion, the solutions of the conductivity equation are uniquely determined by the boundary measurements provided by the Dirichlet-to-Neumann maps, as we desired.
\end{proof}
Observe that by \eqref{eq:rho} or \eqref{eq:uniqueness-1}, an immediate consequence is that $\rho\equiv 0$ in $\Omega$. That is,
\begin{align}\label{eq:uniqueness-2}
\frac{\nabla f}{f}\cdot \nabla(w_{0,f}/f)=\frac{\nabla g}{g}\cdot \nabla(w_{0,g}/g), \qquad \text{ in }\Omega.
\end{align}

Since our conductivities are not sufficiently regular to define the potentials $q_f=\Delta f/f$ pointwise, we can adopt the distributional setting proposed by Brown \cite{Brown-1996}. Indeed, for $\psi\in C_c^{\infty}(\Omega)$, we may define the product $q_f w_{0,f}$ in the distributional sense as follows
\begin{align}\label{eq:weak-product}
    m_{q_f}(w_{0,f})(\psi):=\Biggl\langle \frac{\Delta f}{f} w_{0,f}, \psi \Biggr\rangle&=-\int_{\Omega} \nabla f\cdot \nabla\left(\frac{w_{0,f}\psi}{f}\right)\, dy.
\end{align}
For conductivities in $W^{2,\infty}(\Omega)$, we previously proved some equivalences in Proposition \ref{prop:difference-potentials}; now we present a stronger result, which directly implies the uniqueness result for this kind of conductivities.
\begin{proposition}\label{prop:equivalences}
    Let $\Omega\subset \mathbb{R}^n$ be a bounded domain with Lipschitz boundary and let $f, g\in W^{1,\infty}(\Omega)$ be proper conductivities away from zero. If $\Lambda_f=\Lambda_g$, then the following statements are equivalent
\begin{enumerate}
    \item[(i)]  $w_{0,f}=w_{0,g}$ a.e. in $\Omega$.
     \item[(ii)] $f=g$ a.e. in $\Omega$.
     \item[(iii)] $m_{q_f}(w_{0,f})=m_{q_g}(w_{0,g})$ a.e. in $\Omega$.
\end{enumerate} 
\end{proposition}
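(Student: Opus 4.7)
The plan is to leverage Proposition \ref{prop:extensions-equal} as the central input: since $\Lambda_f = \Lambda_g$, for every boundary value $\varphi_0 \in H^{1/2}(\partial\Omega)$ we already know that the rescaled solutions coincide, i.e., there exists a single function $u_0 \in H^1(\Omega)$ (depending on $\varphi_0$) with $u_0|_{\partial\Omega} = \varphi_0$ such that $w_{0,f} = f\, u_0$ and $w_{0,g} = g\, u_0$ a.e. in $\Omega$. I would fix this notation at the start of the proof; the three implications then become essentially algebraic once one verifies the correct interpretation of $m_{q_f}$.

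For the easy direction (ii) $\Rightarrow$ (i) and (ii) $\Rightarrow$ (iii), if $f = g$ a.e.\ then clearly $w_{0,f} = fu_0 = gu_0 = w_{0,g}$, and the distributional products defined by \eqref{eq:weak-product} agree by inspection. For (i) $\Rightarrow$ (ii), the equality $fu_0 = gu_0$ gives $(f-g)u_0 = 0$ a.e.; I would specialise to the constant boundary value $\varphi_0 \equiv 1$, for which $u_0 \equiv 1$ is the unique solution of the conductivity equation, so $f = g$ a.e.\ follows. (Equivalently, since the equivalence is meant to hold simultaneously for every $\varphi_0$, this specialisation is legitimate.)

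The heart of the proof is the equivalence (i) $\iff$ (iii). The key computation is that $w_{0,f}$ is a \emph{weak} solution of the Schr\"odinger equation with distributional potential $q_f = \Delta f/f$. Concretely, I would insert the test function $\phi = \psi/f \in H^1_0(\Omega)$ (legal because $f$ is Lipschitz and bounded away from zero) into the weak form $\int f^2 \nabla u_0 \cdot \nabla \phi\, dy = 0$, expand $\nabla(w_{0,f}/f) = \nabla u_0$, and compare with the definition \eqref{eq:weak-product} of $m_{q_f}(w_{0,f})$. The terms involving $\nabla f \cdot \nabla u_0$ cancel, yielding
\begin{equation*}
\int_\Omega \nabla w_{0,f} \cdot \nabla \psi\, dy + m_{q_f}(w_{0,f})(\psi) = 0 \qquad \forall\, \psi \in C_c^\infty(\Omega),
\end{equation*}
i.e.\ $m_{q_f}(w_{0,f}) = \Delta w_{0,f}$ in $\mathcal{D}'(\Omega)$. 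The same identity holds for $g$. Hence (iii) is equivalent to $\Delta(w_{0,f} - w_{0,g}) = 0$ in $\mathcal{D}'(\Omega)$. Combining this with the boundary identity $w_{0,f}|_{\partial\Omega} = f|_{\partial\Omega}\varphi_0 = g|_{\partial\Omega}\varphi_0 = w_{0,g}|_{\partial\Omega}$, which uses $f|_{\partial\Omega} = g|_{\partial\Omega}$ from Lemma \ref{lemma:boundary}, the function $v := w_{0,f} - w_{0,g} \in H^1_0(\Omega)$ is weakly harmonic, hence identically zero by uniqueness of the Dirichlet problem for the Laplacian. This gives (iii) $\Rightarrow$ (i); the reverse implication (i) $\Rightarrow$ (iii) is trivial.

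The main obstacle I expect is justifying the distributional identity $m_{q_f}(w_{0,f}) = \Delta w_{0,f}$ cleanly at the regularity $f \in W^{1,\infty}(\Omega)$, since neither $\Delta f$ nor the product $q_f w_{0,f}$ is defined pointwise. The Brown-style distributional pairing \eqref{eq:weak-product} was designed precisely for this, and the test-function substitution $\phi = \psi/f$ is the only manoeuvre needed, but care must be taken that $\psi/f \in H^1_0(\Omega)$ is a legitimate test function in the weak formulation of the conductivity equation, and that the integrations by parts are all between $W^{1,\infty}$ and $H^1$ factors so no regularity is lost. Once this identity is in hand, the rest of the proof is purely algebraic manipulation of the factorisations $w_{0,f} = fu_0$, $w_{0,g} = gu_0$.
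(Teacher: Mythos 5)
Your proof is correct and follows essentially the same route as the paper's: both rest on Proposition \ref{prop:extensions-equal} as the central input, on the distributional identity $\Delta w_{0,f}=m_{q_f}(w_{0,f})$, and on the fact that a harmonic $H^1_0$ function is zero, using Lemma \ref{lemma:boundary} for the equality of traces. You are in fact more careful than the paper in two spots: you actually verify the distributional identity via the test-function substitution $\phi=\psi/f$ in the weak form of the conductivity equation (the paper merely asserts that $\Delta w_{0,f}-m_{q_f}(w_{0,f})=0$ holds), and in (i) $\Rightarrow$ (ii) you justify dividing by $u_0$ by specialising to $\varphi_0\equiv 1$, whereas the paper's ``which implies that $f=g$'' tacitly assumes $w_{0,f}$ does not vanish on a set of positive measure.
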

\begin{proof}
Let $\varphi_0\in H^{1/2}(\partial\Omega)$, then there exists $w_{0,f}, w_{0,g}\in H^1(\Omega)$ such that $w_{0,f}/f$, $w_{0,g}/g$ are solutions of the conductivity equation \eqref{eq:conductivity-Vekua} in $\Omega$ with $(w_{0,f}/f)|_{\partial\Omega}=\varphi_0=(w_{0,g}/g)|_{\partial\Omega}$. Then $w_{0,f}$, $w_{0,g}$ satisfy in the distributional sense
\begin{equation*}
    \Delta w_{0,f}-m_{q_f}(w_{0,f})=0, \qquad \Delta w_{0,g}-m_{q_g}(w_{0,g})=0,
\end{equation*}
where $m_q$ was defined in \eqref{eq:weak-product}.

\textbf{(i) implies (ii):} If $w_{0,f}=w_{0,g}$ a.e. in $\Omega$, then by Proposition \ref{prop:extensions-equal} $w_{0,f}/f=w_{0,g}/g$, which implies that $f=g$ a.e. in $\Omega$.

\textbf{(ii) implies (iii):} If $f=g$ a.e. in $\Omega$, by definition $m_{q_f}(\cdot)=m_{q_g}(\cdot)$. Again by Proposition \ref{prop:extensions-equal}, we get that $w_{0,f}=w_{0,g}$ a.e. in $\Omega$. So, $m_{q_f}(w_{0,f})=m_{q_g}(w_{0,g})$ a.e. in $\Omega$.

\textbf{(iii) implies (i):} If $m_{q_f}(w_{0,f})=m_{q_g}(w_{0,g})$, then $w_{0,f}-w_{0,g}$ is a harmonic function with vanishing trace, so $w_{0,f}=w_{0,g}$ in $\Omega$.
This completes the proof.
\end{proof}
Now, we are ready to give an alternative proof of the uniqueness result for the Calderón problem for conductivities living in $W^{1,\infty}(\Omega)$ away from zero.

\begin{proof}[\textbf{Proof of Theorem \ref{th:uniqueness}}]
Let $\varphi_0\in H^{1/2}(\partial\Omega)$, then there exists $w_{0,f}, w_{0,g}\in H^1(\Omega)$ such that $w_{0,f}/f$, $w_{0,g}/g$ are solutions of the conductivity equation \eqref{eq:conductivity-Vekua} in $\Omega$ with $(w_{0,f}/f)|_{\partial\Omega}=\varphi_0=(w_{0,g}/g)|_{\partial\Omega}$. 

If we extend $f,g\in W^{1,\infty}(\mathbb{R}^n)$ with $f\equiv g$ in $\R^n\setminus \overline{\Omega}$ and $f=g=1$ outside a big ball, then following the construction of complex geometrical solutions provided in Theorem \ref{th:CGO}, then for $\zeta\in \mathbb{C}^n$ with $|\zeta|$ sufficiently large
\begin{align}\label{eq:CGO}
\frac{w_{0,f}}{f}(x)=e^{x\cdot\zeta}\left(f^{-1}+\psi_f(x,\zeta)\right), \quad \frac{w_{0,g}}{g}(x)=e^{x\cdot\zeta}\left(g^{-1}+\psi_g(x,\zeta)\right)
\end{align}
where $\psi_{f}, \psi_{g}\in L^2_{\delta}(\mathbb{R}^n)$.
Moreover, 
\begin{align*}
\psi_{f}(x,\zeta)=\left(\omega_{0,f}(x,\zeta)-f^{-1}\right)+\omega_{1,f}(x,\zeta), \quad \psi_{g}(x,\zeta)=\left(\omega_{0,g}(x,\zeta)-g^{-1}\right)+\omega_{1,g}(x,\zeta),
\end{align*}
where
\begin{align*}
\lim_{|\zeta|\rightarrow \infty} \|\omega_{0,f}(x,\zeta)-f^{-1}\|_{H_{\delta}^1}=0, &\quad \lim_{|\zeta|\rightarrow \infty} \|\omega_{1,f}(x,\zeta)\|_{L_{\delta}^1}=0,\\
\lim_{|\zeta|\rightarrow \infty} \|\omega_{0,g}(x,\zeta)-f^{-1}\|_{H_{\delta}^1}=0, &\quad \lim_{|\zeta|\rightarrow \infty} \|\omega_{1,g}(x,\zeta)\|_{L_{\delta}^1}=0. 
\end{align*}
By Proposition \ref{prop:extensions-equal}, we know that $w_{0,f}/f=w_{0,g}/g$ in $\Omega$. Consequently,
\begin{equation*}
    e^{x\cdot\zeta}\left(f^{-1}+\psi_f(x,\zeta)\right)=e^{x\cdot\zeta}\left(g^{-1}+\psi_g(x,\zeta)\right)
\end{equation*}
By taking the limit as $|\zeta|\rightarrow \infty$, we get that $f\equiv g$ in $\Omega$, ending the proof of Theorem \ref{th:uniqueness}.
\end{proof}




\end{document}